\theoremstyle{plain}
\newtheorem{theorem}{Theorem}[section]
\newtheorem{proposition}[theorem]{Proposition}
\newtheorem{lemma}[theorem]{Lemma}
\newtheorem{corollary}[theorem]{Corollary}
\theoremstyle{definition}
\newtheorem{definition}[theorem]{Definition}
\newtheorem{example}[theorem]{Example}
\newtheorem{remark}[theorem]{Remark}
\newcommand{\eps}{\varepsilon}
\renewcommand{\emptyset}{\varnothing}
\DeclareMathOperator{\im}{Im}
\DeclareMathOperator{\ind}{ind}
\DeclareMathOperator{\re}{Re}
\DeclarePairedDelimiter{\abs}{\lvert}{\rvert}
\DeclarePairedDelimiter{\cc}{[}{]} % [a, b]
\DeclarePairedDelimiter{\co}{[}{[} % [a, b[
\newcommand{\R}{\ensuremath{\mathbb{R}}}
\newcommand{\C}{\ensuremath{\mathbb{C}}}
\newcommand{\Z}{\ensuremath{\mathbb{Z}}}
\newcommand{\bD}{\mathbb{D}}
\newcommand{\cA}{\mathcal{A}}
\newcommand{\cC}{\mathcal{C}}
\newcommand{\cM}{\mathcal{M}}
\newcommand{\conj}[1]{\overline{#1}}
\newcommand{\wn}[2]{n(#1; #2)}
\DeclareMathOperator{\crit}{crit}
\DeclareMathOperator{\tr}{trace}
\numberwithin{equation}{section}
\title{\textsc{Number and location of pre-images under harmonic mappings in the plane}}
\author{Olivier S\`{e}te\footnotemark[1] \and Jan Zur\footnotemark[1]}
\date{June 29, 2020}
\begin{document}
\maketitle
\renewcommand{\thefootnote}{\fnsymbol{footnote}}
\footnotetext[1]{TU Berlin, Department of Mathematics, MA 3-3, Stra{\ss}e des 
17.\@ Juni 136, 10623 Berlin, Germany. 
\texttt{\{sete,zur\}@math.tu-berlin.de}}

\captionsetup[figure]{skip=0pt}

\begin{abstract}
We derive a formula for the number of pre-images under a non-degenerate 
harmonic mapping $f$, using the argument principle.  This formula reveals a 
connection between the pre-images and the caustics.
Our results allow to deduce the number of pre-images under $f$ 
geometrically for every non-caustic point.  We 
approximately locate the pre-images of points near the caustics.
Moreover, we apply our results to prove that for every 
$k = n, n+1, \ldots, n^2$ there exists a harmonic polynomial of degree $n$ with 
$k$ zeros.
\end{abstract}
\paragraph*{Keywords:}
Harmonic mappings, pre-images, caustics, argument principle, valence, zeros of 
harmonic polynomials.
\paragraph*{AMS Subject Classification (2010):} 
30C55;  %General theory of univalent and multivalent functions
31A05;  %Harmonic, subharmonic, superharmonic functions
55M25.  %Degree, winding number

\section{Introduction}\label{sect:introduction} 

\begin{figure}[th]
\centering
\includegraphics[width=0.5\linewidth]{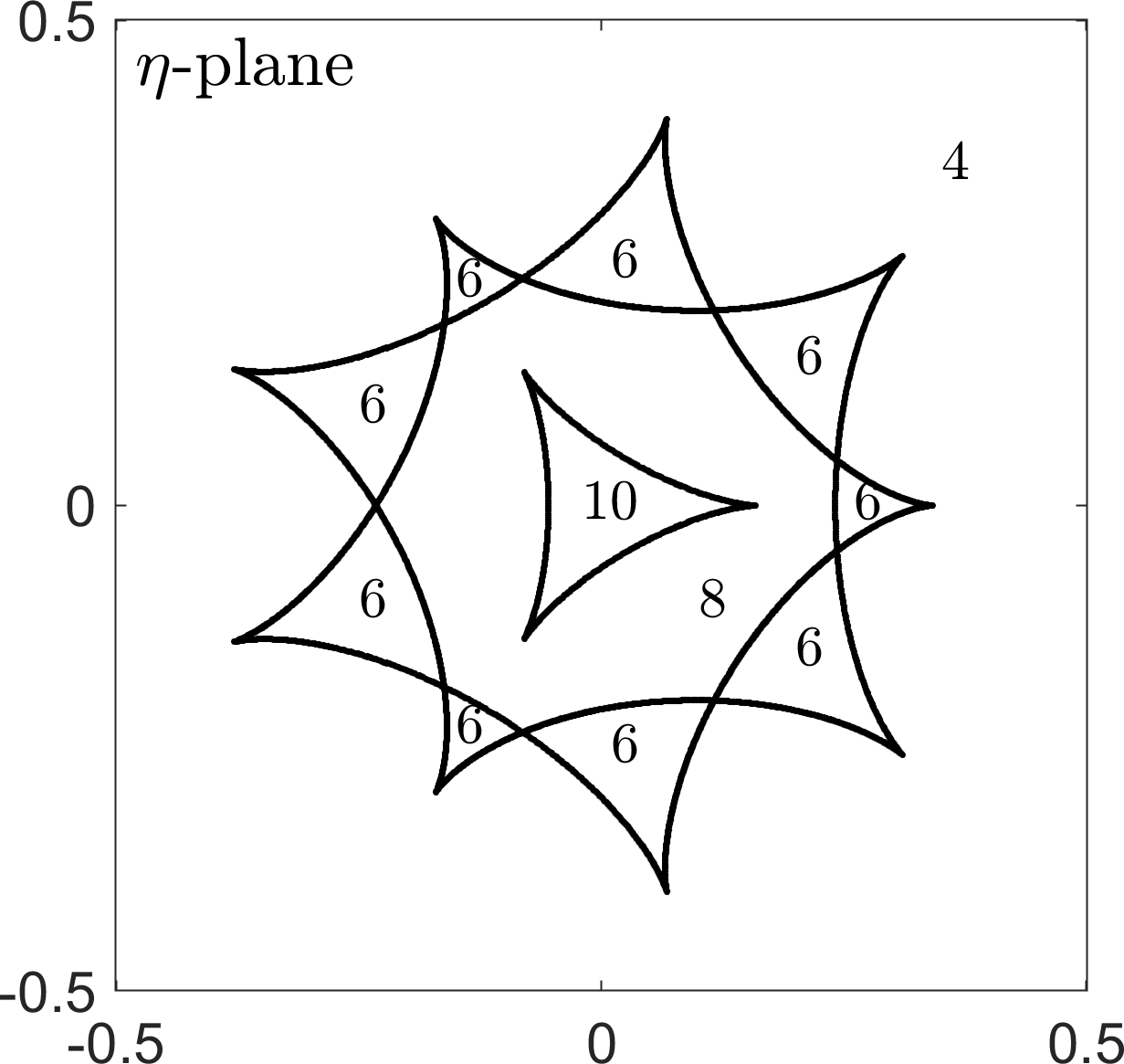}

\vspace{1mm}

\caption{Number of pre-images of $\eta$ under $f(z) = z - \conj{z^2/(z^3 - 
0.6^3)}$ for an $\eta$ in the respective regions; see also 
Example~\ref{ex:mpw} below.  The black lines mark the 
caustics (critical values) of $f$. The number of pre-images of $\eta$ in the 
outer tile corresponds to the number of poles of $f$ (including $\infty$).}
\label{fig:mpw_plain}
\end{figure}

Harmonic mappings in the plane, i.e., functions $f: \Omega \to \C$ with $\Delta 
f = 0$ on an open set $\Omega \subseteq \C$, regained attention in the 
last decades, starting from the seminal work of Clunie and 
Sheil-Small~\cite{ClunieSheil-Small1984}.
See, e.g., the large collection of open problems by Bshouty and 
Lyzzaik~\cite{BshoutyLyzzaik2010} and references therein.
While we consider here multivalent harmonic mappings, also (locally) univalent 
harmonic mappings are of interest, see, e.g., Duren's 
textbook~\cite{Duren2004}, especially in the context of quasi-conformal 
mappings~\cite{Ahlfors1966_qc}.

Numerous authors have studied the number and location of zeros of harmonic 
mappings, i.e., the solutions of $f(z)=0$. Of particular interest have been 
harmonic polynomials of the form $f(z)=p(z)-\conj{z}$ 
\cite{KhavinsonSwiatek2003,Geyer2008}, or $f(z)=p(z) + \conj{q(z)}$ and the 
questions related to Wilmshurst's 
conjecture~\cite{Wilmshurst1998,LeeLerarioLundberg2015,KhavinsonLeeSaez2018}. 
Also, the zeros of rational harmonic mappings of the form $f(z) = r(z)-\conj{z}$ have been 
studied intensively~\cite{KhavinsonNeumann2006,BleherHommaJiRoeder2014,
LuceSeteLiesen2014a,LuceSeteLiesen2014b,LiesenZur2018b}, since these are of 
interest when modeling the phenomenon of gravitational 
lensing~\cite{KhavinsonNeumann2008,Petters2010,BeneteauHudson2018}.

Here we focus on solutions of $f(z) = \eta$ for given (but arbitrary) 
$\eta \in \C$. As shown in~\cite{LiesenZur2018a} for 
rational harmonic mappings of the form $f(z) = r(z) - \conj{z}$, the number of solutions 
can vary significantly under changes of $\eta$.  Moreover, changes only 
occur when $\eta$ is ``moved'' through the caustics of $f$; see 
Figure~\ref{fig:mpw_plain}.
This paper is devoted to study this effect for a more general class of harmonic mappings.
We show the following:

(1) In Section~\ref{sect:global_valence} we derive (local and global) formulas 
for the number of pre-images of $\eta$ under a non-degenerate harmonic mapping 
$f$ (Definition~\ref{def:non-degenerate}) in terms of the poles and the 
winding number of the caustics about $\eta$, e.g.,
\begin{equation}\label{eqn:counting_introduction}
N_\eta(f) = P(f) + 2\sum_{\gamma \in \crit} \wn{f \circ \gamma}{\eta};
\end{equation} 
see Theorem~\ref{thm:counting}.
An immediate consequence of~\eqref{eqn:counting_introduction} is that the number of pre-images changes by $\pm 2$ when $\eta$ changes 
from one side to the other of a single caustic arc; see Figure~\ref{fig:mpw_plain}.

(2) In Section~\ref{sect:local_valence} we complement Lyzzaik's 
study~\cite{Lyzzaik1992} of the local behavior of light harmonic mappings at 
their critical points.
We approximately locate pre-images of $\eta$ near a fold caustic point,
which makes the pre-images also accessible for computations.
Moreover, we determine for which $\eta$ near a fold we have locally two or 
no pre-images; see Theorem~\ref{thm:local_fold}. 

(3) In Section~\ref{sect:polynomials} we apply the results from 
Sections~\ref{sect:global_valence} and~\ref{sect:local_valence} to harmonic 
polynomials.  In particular we prove that for all $k \in \{n,n+1,\ldots,n^2\}$ 
there exists a harmonic polynomial $f(z) = p(z) + \conj{q(z)}$ with $\deg(p) = 
n$ and $\deg(q) < n$ with exactly $k$ zeros, i.e., every number between the 
minimum and maximum can be attained; see Corollary~5.6.  This 
generalizes a result of Bleher et al.~\cite[Thm.~1.1]{BleherHommaJiRoeder2014}.

\section{Preliminaries}
\label{sect:prelim}

The key ingredient to derive the formulas for the exact number of pre-images in 
Section~\ref{sect:global_valence} is the argument principle for harmonic 
mappings, applied on the critical set.
In preparation, we collect and extend several known results in this section.

A \emph{harmonic mapping} is a function $f : \Omega \to \C$ defined on an open 
set $\Omega \subseteq \C$ and with
\begin{equation*}
\Delta f = \partial_{xx} f + \partial_{yy} f
= 4 \partial_{\conj{z}} \partial_z f = 0,
\end{equation*}
where $\partial_z$ and $\partial_{\conj{z}}$ denote the \emph{Wirtinger 
derivatives} of $f$; see e.g.~\cite[Sect.~1.2]{Duren2004}.  If $f$ is harmonic 
in the 
open disk $D = \{z \in \C : \abs{z-z_0} < r\}$, it has a local decomposition
\begin{equation}\label{eqn:local_decomp}
f(z) = h(z) + \conj{g(z)} = \sum_{k=0}^\infty a_k(z-z_0)^k + 
\conj{\sum_{k=0}^\infty b_k(z-z_0)^k}, \quad z \in D,
\end{equation} 
with analytic functions $h$ and $g$ in $D$, which are unique up to 
an additive constant; see~\cite[p.~412]{DurenHengartnerLaugesen1996} 
or~\cite[p.~7]{Duren2004}.
If $f$ is harmonic in the punctured disk $D = \{z \in \C: 0 < \abs{z-z_0} < 
r \}$, it has a local decomposition
\begin{equation}\label{eqn:local_decomp_sing}
f(z)
= \sum_{\mathclap{k=-\infty}}^\infty a_k(z-z_0)^k + 
\conj{\sum_{\mathclap{k=-\infty}}^\infty b_k(z-z_0)^k} + c\log\abs{z-z_0},
\quad z \in D;
\end{equation}
see~\cite{SuffridgeThompson2000,HengartnerSchober1987}.  We consistently 
use the notation from~\eqref{eqn:local_decomp} 
and~\eqref{eqn:local_decomp_sing}.

The \emph{Jacobian} of a harmonic mapping $f$ at $z \in \Omega$ is
\begin{equation} \label{eqn:jacobian}
J_f(z) = \abs{\partial_z f(z)}^2 - \abs{\partial_{\conj{z}} f(z)}^2
= \abs{h'(z)}^2 - \abs{g'(z)}^2,
\end{equation}
where $f = h+\conj{g}$ is a local decomposition~\eqref{eqn:local_decomp}.
We call $f$ \emph{sense-preserving} at $z$ if $J_f(z) > 0$, 
\emph{sense-reversing} at $z$ if $J_f(z) < 0$, and \emph{singular} at $z$ if 
$J_f(z) = 0$.
Moreover, we call $f$ \emph{singular}, if $f$ is singular at one of its zeros.
If $\varphi$ is an analytic function, then  $f \circ \varphi$ is again a 
harmonic mapping and
\begin{equation}\label{eqn:jacobian_transformation}
J_{f \circ \varphi}(z) = J_f(\varphi(z)) \abs{\varphi'(z)}^2.
\end{equation}
In particular, if $\varphi'(z) \neq 0$, the maps $f$ at $\varphi(z)$ and 
$f \circ \varphi$ at $z$ are simultaneously sense-preserving, sense-reversing, 
or singular, respectively.

\subsection{Critical set and caustics}
\label{sect:crit}

The points at which a harmonic mapping $f$ is singular form the 
\emph{critical~set}
\begin{equation} \label{eqn:crit}
\cC = \{ z \in \Omega : J_f(z) = 0 \},
\end{equation}
which consists of the level set of an analytic function, and certain
isolated points, as we see next.

The \emph{second complex dilatation} of a harmonic mapping $f$ is
\begin{equation*}
\omega(z) = \frac{\conj{\partial_{\conj{z}}f(z)}}{\partial_z f(z)}
= \frac{g'(z)}{h'(z)},
\end{equation*}
with the decomposition $f = h + \conj{g}$ 
from~\eqref{eqn:local_decomp};
see~\cite[p.~5]{Duren2004}, \cite[p.~5]{Ahlfors1966_qc} 
or~\cite[p.~71]{SuffridgeThompson2000}.
We assume that $\partial_z f = h'$ has only isolated zeros in $\Omega$, 
so that $\omega$ is analytic in $\{ z \in \Omega : \partial_z f(z) \neq 0 \}$,
and the singularities of $\omega$ in $\Omega$ are poles or removable 
singularities (which we assume to be removed).
Moreover, we assume that $\abs{\omega} \not\equiv 1$ on an open set (harmonic 
mappings with this property are characterized in~\cite[Lem.~2.1]{Lyzzaik1992}).

Let $z_0 \in \Omega$.  If $h'(z_0) \neq 0$, then $J_f(z_0) = \abs{h'(z_0)}^2 - 
\abs{g'(z_0)}^2 = 0$ is equivalent to  $\abs{\omega(z_0)} = 1$, and if $h'(z_0) 
= 0$, then $J_f(z_0) = 0$ is equivalent to $g'(z_0) = 0$.  Hence, 
$\abs{\omega(z_0)} = 1$ implies $J_f(z_0) = 0$, but the converse is not true in 
general.
Define
\begin{equation} \label{eqn:omega_neq_1}
\cM = \{ z \in \cC : \abs{\omega(z)} \neq 1 \}.
\end{equation}
By the above computation,
\begin{equation*}
\cM = \{ z \in \Omega : h'(z) = g'(z) = 0 \text{ and } 
\lim_{\zeta \to z} \abs{\omega(\zeta)} \neq 1 \}.
\end{equation*}
For $z_0 \in \cM$, there exists a neighborhood of $z_0$ containing no other 
point in $\cC$; see~\cite[Lem.~2.2]{Lyzzaik1992}.
By construction,
\begin{equation*}
\cC \setminus \cM
= \{ z \in \Omega : \abs{\omega(z)} = 1 \}
\end{equation*}
is a level set of the analytic function $\omega$.  Hence, $\cC \setminus \cM$ 
consists of analytic curves, which 
intersect in $z_0 \in \cC \setminus \cM$ if and only if $\omega'(z_0) = 
0$.  More precisely, if $\omega^{(k)}(z_0) = 0$ for $k = 1, \ldots, n-1$ and 
$\omega^{(n)}(z_0) \neq 0$, then $2n$ analytic arcs meet at $z_0$ with 
equispaced angles~\cite[p.~18]{Walsh1950}; see also 
Example~\ref{ex:wilmshurst}.

At points $z \in \cC \setminus \cM$ with $\omega'(z) \neq 0$, the equation
\begin{equation} \label{eqn:parametrization}
\omega(\gamma(t)) = e^{it}
\end{equation}
implicitly defines a local analytic parametrization $z = \gamma(t)$ of $\cC 
\setminus \cM$.
We can write it locally as $\gamma(t) = \omega^{-1}(e^{it})$
with a continuous branch of $\omega^{-1}$.
The corresponding tangent vector at $z = \gamma(t)$ is
\begin{equation} \label{eqn:tangent_to_crit}
\gamma'(t)
= i \frac{\omega(z)}{\omega'(z)}.
\end{equation}
By construction $f$ is sense-preserving to the left of $\gamma$, and 
sense-reversing to the right of $\gamma$.

The image of the critical set under a harmonic mapping $f$ plays a decisive 
role for the number of pre-images.
We call the set of critical values of $f$, i.e., $f(\cC)$, the set of 
\emph{caustic points}, or simply the \emph{caustics} of $f$.
An $\eta \in \C$ has a pre-image under $f$ on the critical set if, and 
only if, $\eta$ is a caustic point.

The next lemma characterizes a tangent vector to the caustics and the curvature 
of the caustics; see~\cite[Lem.~2.3]{Lyzzaik1992}.

\begin{lemma} \label{lem:tangent_to_caustic}
Let $f$ be a harmonic mapping, $z_0 \in \cC \setminus \cM$ with $\omega'(z_0) 
\neq 0$, and let $z_0 = \gamma(t_0)$ with the 
parametrization~\eqref{eqn:parametrization}.
Then $f \circ \gamma$ is a parametrization of a caustic and the
corresponding tangent vector at $f(z_0)$ is
\begin{equation*}
\tau(t_0) = \frac{d}{dt} (f \circ \gamma)(t_0)
= e^{- i t_0/2} \psi(t_0),
\end{equation*}
with
\begin{equation*}
\psi(t_0) = 2 \re( e^{i t_0/2} h'(\gamma(t_0)) \gamma'(t_0)),
\end{equation*}
where $f = h + \conj g$ is a decomposition~\eqref{eqn:local_decomp} in a 
neighborhood of $z_0$.  In particular, the rate of change of the argument of 
the 
tangent vector is
\begin{equation*}
\frac{d}{dt} \arg(\tau(t)) \big\vert_{t = t_0} = - \frac{1}{2}
\end{equation*}
at points where $\psi(t_0) \neq 0$, i.e., the curvature of the 
caustics is constant with respect to the 
parametrization $f \circ \gamma$.

Moreover, $\psi$ has either only finitely many zeros, or is identically zero, 
in which case $f$ is constant on $\gamma$.
\end{lemma}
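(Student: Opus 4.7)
The plan is to write $f = h + \bar g$ and differentiate $f \circ \gamma$ directly, then exploit the defining relation $\omega(\gamma(t)) = e^{it}$ for the parametrization to reduce everything to a single analytic quantity along $\gamma$.

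First, by the chain rule for harmonic functions,
\[
\tau(t) = \frac{d}{dt}(f \circ \gamma)(t) = h'(\gamma(t))\gamma'(t) + \overline{g'(\gamma(t))\gamma'(t)}.
\]
Because $\omega = g'/h'$ and $\omega(\gamma(t)) = e^{it}$, I can substitute $g'(\gamma(t)) = e^{it}h'(\gamma(t))$, so that setting $u(t) \coloneq h'(\gamma(t))\gamma'(t)$ gives $\tau(t) = u(t) + e^{-it}\,\overline{u(t)}$. Factoring out $e^{-it/2}$ symmetrizes this as
\[
\tau(t) = e^{-it/2}\bigl(e^{it/2}u(t) + \overline{e^{it/2}u(t)}\bigr) = e^{-it/2}\cdot 2\re\!\left(e^{it/2}h'(\gamma(t))\gamma'(t)\right),
\]
which is precisely the claimed expression, with $\psi(t) \coloneq 2\re(e^{it/2}h'(\gamma(t))\gamma'(t)) \in \R$. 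Thus $f \circ \gamma$ traces out a curve in $\C$ whose values lie in $f(\cC)$ by definition, confirming that it parametrizes a caustic.

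For the curvature computation, the crucial observation is that $\psi$ is real-valued. Hence at any $t_0$ with $\psi(t_0) \neq 0$, $\psi$ keeps a constant sign in a neighborhood of $t_0$, so $\arg(\psi(t))$ is locally constant (equal to $0$ or $\pi$). From $\tau(t) = e^{-it/2}\psi(t)$ I therefore get $\arg(\tau(t)) = -t/2 + \arg(\psi(t))$ locally, and differentiation at $t_0$ yields
\[
\frac{d}{dt}\arg(\tau(t))\Big|_{t=t_0} = -\frac{1}{2}.
\]

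The only real subtlety is the algebraic manipulation in step one — pulling out the correct half-angle factor $e^{-it/2}$ so that the remaining bracket is manifestly real; everything else is bookkeeping. I do not expect any genuine obstacle, since the hypotheses $z_0 \in \cC \setminus \cM$ and $\omega'(z_0) \neq 0$ guarantee that the implicit parametrization $\gamma$ is smooth at $t_0$ (so all the derivatives above exist), and $\psi(t_0)\neq 0$ is assumed precisely so that $\arg(\tau)$ is differentiable there.
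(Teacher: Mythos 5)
Your proof is correct and follows essentially the same route as the paper: differentiate $f\circ\gamma$, substitute $g'(\gamma(t))=e^{it}h'(\gamma(t))$ from the defining relation $\omega\circ\gamma=e^{it}$, factor out $e^{-it_0/2}$ to expose the real quantity $\psi$, and then use that $\arg(\psi)$ is locally constant where $\psi\neq 0$ to get the derivative $-\tfrac12$. No gaps.
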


\begin{definition}
In the notation of Lemma~\ref{lem:tangent_to_caustic}, assume that the tangent 
$\tau(t_0)$ exists.  Then, the point $(f \circ \gamma)(t_0)$ is called
\begin{enumerate}
\item a \emph{fold caustic point} or simply a \emph{fold}, if the tangent is 
non-zero,

\item a \emph{cusp} of the caustic, if $\psi$ has a zero with a sign change at 
$t_0$.
\end{enumerate}
\end{definition}

\begin{remark}
\begin{enumerate}
\item If $(f \circ \gamma)(t_0)$ is a fold, then $f$ 
is \emph{light} (i.e., $f^{-1}( \{ \eta \} )$ is empty~or totally disconnected 
for every $\eta \in \C$) in a neighborhood of $z_0 = \gamma(t_0)$.
Indeed, if $\cC \setminus \cM$ can be
parametrized according to~\eqref{eqn:parametrization}, then $J_f$ is not 
identically zero.
Also, $f \circ \gamma$ is not constant at a fold.
Hence, $f$ is light in a neighborhood of $z_0$ by~\cite[Thm.~2.1]{Lyzzaik1992}.

\item
At a cusp, 
the tangent vector becomes zero and
the argument of the tangent vector jumps by $+\pi$.
Note that the caustic either has only a finite number of cusps, or degenerates 
to a single point by Lemma~\ref{lem:tangent_to_caustic}.

\item 
In~\cite[Def.~2.2]{Lyzzaik1992}, a critical point $z_0 = \gamma(t_0)$ is called 
a critical point of
(i) the first kind, if $f(z_0)$ is a cusp,
(ii) the second kind, if $h'(z_0) = 0$ or $g'(z_0) = 0$, and if $\psi(t_0) = 0$ 
but $\psi$ does not change its sign, and
(iii) the third kind, if $\omega'(z_0) = 0$.
\end{enumerate}
\end{remark}

The curvature and the cusps of the caustics of $f$ are apparent in the examples 
in Figure~\ref{fig:mpw_log}.
The next lemma characterizes the fold caustic points in terms of the coefficients in~\eqref{eqn:local_decomp}.

\begin{lemma}\label{lem:cusp_condition}
Let $f$ be a harmonic mapping, $z_0 \in \cC \setminus \cM$ with $\omega'(z_0) 
\neq 0$ and $h'(z_0) \neq 0$, and let $z_0 = \gamma(t_0)$ with the 
parametrization~\eqref{eqn:parametrization}.
We consider the decomposition~\eqref{eqn:local_decomp} of $f$ at $z_0$ and
define $\theta \in \co{0, \pi}$ by $\conj{b}_1 = a_1 e^{i 2 \theta}$.
Then the following are equivalent:
\begin{enumerate}
\item $\psi(t_0) \neq 0$,
\item $\displaystyle \im \left( \frac{1}{e^{i t_0/2} a_1} \left( 
\frac{a_2}{a_1} - \frac{b_2}{b_1} \right) \right) \neq 0$,
\item $\displaystyle \im \left( \frac{a_2}{a_1} e^{i \theta} + 
\conj{\left(\frac{b_2}{b_1} e^{i \theta} \right)} \right) \neq 0$.
\end{enumerate}
\end{lemma}

\begin{proof}
Using~\eqref{eqn:tangent_to_crit}, $e^{it_0} = \omega(z_0) = b_1/a_1$ and 
$\omega'(z_0) = 2 \frac{b_2 a_1 - b_1 a_2}{a_1^2}$, we have
\begin{equation*}
0 \neq \psi(t_0)
= 2 \re \left( e^{i t_0/2} h'(z_0) i \frac{\omega(z_0)}{\omega'(z_0)} \right)
= \re \left( i e^{i t_0/2} a_1 \frac{b_1 a_1}{b_2 a_1 - b_1 a_2} \right).
\end{equation*}
Since $\re(z) \neq 0$ if and only if $\re(1/z) \neq 0$ (for $z \neq 0$), this 
is equivalent to
\begin{equation*}
0 \neq \re \left( -i \frac{1}{e^{i t_0/2} a_1} \frac{b_2 a_1 - b_1 a_2}{b_1 
a_1} 
\right)
= - \im \left( \frac{1}{e^{i t_0/2} a_1} \left( \frac{a_2}{a_1} - 
\frac{b_2}{b_1} \right) \right).
\end{equation*}
Write $a_1 = \abs{a_1} e^{i \alpha}$, then $b_1 = a_1 e^{it_0} = 
\conj{a}_1 e^{-i 2 \theta}$ implies $e^{i (2 \alpha + t_0)} = e^{- i 2 
\theta}$, and hence $e^{i t_0/2} a_1 = \pm \abs{a_1} e^{- i \theta}$,
which yields the equivalence of 2.\@ and 3.
\end{proof}

\subsection{The argument principle for harmonic mappings}

Let $f$ be continuous and non-zero on the trace of a curve 
$\gamma : \cc{a, b} \to \C$.
Then the 
\emph{winding of $f$ on $\gamma$} is defined as the change of argument of 
$f(z)$ as $z$ travels along $\gamma$ from $\gamma(a)$ to $\gamma(b)$, divided 
by $2\pi$, i.e.,
\begin{equation} \label{eqn:winding}
W(f; \gamma) = \frac{1}{2 \pi} \Delta_\gamma \arg(f(z)) = \frac{1}{2 \pi} 
(\theta(b) - \theta(a)),
\end{equation}
where $\theta : \cc{a, b} \to \R$ is continuous with $\theta(t) = 
\arg(f(\gamma(t))$;
see~\cite[Sect.~2.3]{Balk1991} or~\cite[Ch.~7]{Beardon1979} for details.

Let now $\gamma$ be a closed curve.
We denote the \emph{winding number} of $\gamma$ about $\eta \in 
\C \setminus \tr(\gamma)$ by $\wn{\gamma}{\eta}$,
which is related to the winding through
\begin{equation}\label{eqn:winding_winding_number}
W(f;\gamma) = \wn{f\circ\gamma}{0} \quad \text{and} \quad
\wn{\gamma}{\eta} = W(z \mapsto z-\eta; \gamma).
\end{equation}
In particular, $W(f; \gamma)$ is an integer.
Note that $W(f; \gamma) = \wn{f\circ\gamma}{0} = 0$ if $f$ is constant on 
$\gamma$.
Moreover, the winding is also called the degree or 
topological degree of $f$ on $\gamma$; 
see~\cite[p.~3]{Lloyd1978} or~\cite[p.~29]{Sheil-Small2002}.

The argument principle for a continuous function $f$ 
relates the winding of $f$ to the indices of its exceptional points.
A point $z_0 \in \C$ is called an \emph{isolated exceptional point} of a 
function $f$, if $f$ is continuous and non-zero in a punctured neighborhood 
$D = \{ z \in \C : 0 < \abs{z - z_0} < r \}$ of $z_0$, and if
$f$ is either zero, not continuous, or not defined at $z_0$.
Then the \emph{Poincar\'e index} of $f$ at $z_0$ is defined as 
\begin{equation}
\ind(f;z_0) = W(f;\gamma),
\end{equation}
where $\gamma$ is a closed Jordan curve in $D$ about $z_0$ oriented in the 
positive sense, i.e., with $\wn{\gamma}{z_0} = 1$.
The Poincar\'e index is also called the \emph{index}~\cite[Def.~2.2.2]{Lloyd1978} or 
the multiplicity~\cite[p.~44]{Sheil-Small2002}.
Similarly, $\infty$ is an isolated exceptional point of $f$, if $f$ is 
continuous and non-zero in $D = \{ z \in \C : \abs{z} > R \}$.  We define 
$\ind(f; \infty) = W(f; \gamma)$, where $\gamma$ is a 
closed Jordan curve in $D$ which is negatively oriented and surrounding the 
origin, such that $\infty$ lies on the left of $\gamma$ on the Riemann sphere
$\widehat{\C} = \C \cup \{ \infty \}$.
In either case the Poincar\'e index is independent of the choice of $\gamma$.
We get with $\varphi(z) = z^{-1}$
\begin{equation}\label{eqn:index_infty_phi}
\ind(f; \infty) = W(f; \gamma) = W(f \circ \varphi; \varphi^{-1} \circ \gamma)
= \ind(f \circ \varphi; 0).
\end{equation}
The Poincar\'e index generalizes the multiplicity of zeros 
and order of poles 
of an analytic function; see e.g.~\cite[p.~44]{Sheil-Small2002}.

The following version of the argument principle for continuous functions can be 
obtained from~\cite[Sect.~2.3]{Balk1991}, or~\cite[Sect.~2.3]{Sheil-Small2002}. 
Special versions for harmonic mappings are given 
in~\cite{DurenHengartnerLaugesen1996} 
and~\cite[Thm.~2.2]{SuffridgeThompson2000}.

\begin{theorem}[Argument principle] \label{thm:winding}
Let $D$ be a multiply connected domain in $\widehat{\C}$ whose boundary 
consists of Jordan 
curves $\gamma_1, \ldots, \gamma_n$, which are oriented such that $D$ is on the 
left.
Let $f$ be continuous and non-zero in $\overline{D}$, except for finitely many 
exceptional points $z_1,\dots,z_k \in D$.  We then have
\begin{equation*}
\sum_{j=1}^n W(f; \gamma_j) = \sum_{j=1}^k \ind(f; z_j).
\end{equation*}
\end{theorem}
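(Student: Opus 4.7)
The plan is to peel off the exceptional points with small Jordan curves and reduce the statement to the case of a domain on whose closure $f$ is continuous and non-vanishing, then show that in this reduced setting the total winding on the boundary is zero.

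First I would choose pairwise disjoint closed Jordan curves $\sigma_1,\dots,\sigma_k$, each $\sigma_j$ positively oriented and bounding a small closed disk $\overline{B_j}\subset D$ that contains $z_j$ in its interior and no other exceptional point. By the very definition of the Poincar\'e index, $W(f;\sigma_j)=\ind(f;z_j)$. I would then form the cut-down domain $D'=D\setminus\bigcup_j\overline{B_j}$. Its boundary, oriented so that $D'$ lies on the left, consists of $\gamma_1,\dots,\gamma_n$ together with the reversed curves $-\sigma_1,\dots,-\sigma_k$. On $\overline{D'}$ the function $f$ is continuous and nowhere zero, so it suffices to establish
\begin{equation*}
\sum_{j=1}^n W(f;\gamma_j)+\sum_{j=1}^k W(f;-\sigma_j)=0,
\end{equation*}
because $W(f;-\sigma_j)=-W(f;\sigma_j)=-\ind(f;z_j)$ and rearranging yields the claimed identity.

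To prove this auxiliary statement, I would reduce to a simply connected setting by cuts. Choose disjoint simple arcs $\alpha_1,\dots,\alpha_{n+k-1}$ inside $D'$ that join the boundary components pairwise in a tree-like fashion, so that $D''=D'\setminus\bigcup_\ell\alpha_\ell$ is simply connected and its boundary is a single closed Jordan curve $\Gamma$ traversed in such a way that each $\alpha_\ell$ appears exactly twice, once in each direction. When the winding of $f$ is computed along $\Gamma$, the two passes along each $\alpha_\ell$ contribute equal and opposite changes of $\arg(f)$ and therefore cancel. Consequently $W(f;\Gamma)$ equals the sum of the windings along the original boundary components of $D'$. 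It remains to show $W(f;\Gamma)=0$.

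For this last step I would use the homotopy characterization of the winding. Since $f$ is continuous and non-zero on the compact set $\overline{D''}$, the map $z\mapsto f(z)/|f(z)|$ is a continuous map $\overline{D''}\to S^1$, and as $D''$ is simply connected this map lifts to a continuous $\theta:\overline{D''}\to\R$ with $f(z)/|f(z)|=e^{i\theta(z)}$. Tracing $\theta$ once around $\Gamma$ returns to the starting value, so $W(f;\Gamma)=\frac{1}{2\pi}\Delta_\Gamma\arg(f)=0$, as desired. Combining this with the cut cancellation and the initial peeling completes the proof.

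The main obstacle, in my view, is the rigorous handling of the topological bookkeeping: constructing the cuts $\alpha_\ell$ so that $D''$ is genuinely simply connected and that $\Gamma$ is a single parametrized closed curve with the correct orientation, and verifying that the contributions of the cut arcs to $\Delta_\Gamma\arg(f)$ cancel. Once this combinatorial/topological setup is carried out carefully, the analytic input is only the elementary fact that a continuous nonvanishing function on a simply connected set admits a continuous argument, which makes the winding on any closed loop in that set vanish.
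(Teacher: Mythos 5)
The paper itself does not prove Theorem~\ref{thm:winding}; it quotes it from Balk and Sheil-Small. Your reduction is exactly the standard route those sources take: excise disjoint small disks $\overline{B_j}\subset D$ around the $z_j$ so that $W(f;\sigma_j)=\ind(f;z_j)$ by the very definition of the Poincar\'e index, pass to $D'=D\setminus\bigcup_j\overline{B_j}$ on whose closure $f$ is continuous and non-vanishing, cut $D'$ along arcs to a simply connected $D''$, and cancel the two opposite passes over each cut. That bookkeeping is sound (with the caveat that, since $D\subseteq\widehat{\C}$, an exceptional point or boundary component may involve $\infty$, so the excised ``disk'' and its orientation must be taken on the sphere, as in the paper's definition of $\ind(f;\infty)$).

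The genuine gap is in your final step. Removing the cut arcs $\alpha_\ell$ from $D'$ changes the open set but not its closure: $\overline{D''}=\overline{D'}$, which still has $n+k$ boundary components and a nontrivial fundamental group. So the lifting criterion for maps into $S^1$ does not apply to $f/|f|$ on $\overline{D''}$, and the continuous $\theta$ you invoke need not exist there. (A symptom that something is off: if that lifting were available, the cuts would be doing no work at all, and the same argument would ``prove'' $\sum_j W(f;\gamma_j)=0$ on $\overline{D'}$ directly, which is false in general.) Two standard repairs: (i) show that the specific loop $\Gamma$ is null-homotopic \emph{inside} $D''\cup\tr(\Gamma)$ --- it bounds the disk-like piece $D''$, and a contraction can be produced via the Schoenflies/Carath\'eodory correspondence of $D''$ with the unit disk --- and then use that $W(f;\cdot)$ is invariant under homotopies of the loop within the set where $f\neq 0$; or (ii) avoid lifting altogether: by compactness, $\min_{\overline{D'}}|f|>0$ and $f$ is uniformly continuous, so $\overline{D''}$ can be subdivided into cells so small that $f$ maps each into an open half-plane; the winding of $f$ on each cell boundary is then $0$, and summing over the cells (interior edges cancelling) gives $W(f;\Gamma)=0$. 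With either repair in place your argument is complete.
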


Using the argument principle and the definition of the Poincar\'e index at 
infinity yields the following theorem.

\begin{theorem} \label{thm:sum_of_all_indices}
Let $f$ be defined, continuous and non-zero on $\widehat{\C}$,
except for finitely many isolated exceptional points $z_1, \ldots, z_n$ in
$\widehat{\C}$, then
\begin{equation*}
\sum_{j=1}^n \ind(f; z_j) = 0.
\end{equation*}
\end{theorem}

The exceptional points of a harmonic mapping $f$ are its zeros and 
points where $f$ is not defined.
We determine their indices, beginning with the zeros; 
see~\cite[p.~413]{DurenHengartnerLaugesen1996} 
or~\cite[p.~66]{SuffridgeThompson2000}.

\begin{proposition}\label{prop:poincare_index_zeros}
Let $f$ be a harmonic mapping with a zero $z_0$, such that the local 
decomposition~\eqref{eqn:local_decomp} is of the form
\begin{equation*}
f(z) = \sum_{k=n}^\infty a_k (z-z_0)^k + \conj{\sum_{k=n}^\infty b_k 
(z-z_0)^k}, \quad n \geq 1,
\end{equation*}
where $a_n$ or $b_n$ can be zero, then
\begin{equation} \label{eqn:index_zero_in_N}
\ind(f; z_0) = \begin{cases}
+n &\text{ if } \abs{a_n} > \abs{b_n}, \\
-n &\text{ if } \abs{a_n} < \abs{b_n},
\end{cases}
\end{equation}
and, in particular,
\begin{equation} \label{eqn:index_nonsingular_zero}
\ind(f; z_0) = \begin{cases}
+1 &\text{ if $f$ is sense-preserving at } z_0, \\
-1 &\text{ if $f$ is sense-reversing at } z_0.
\end{cases}
\end{equation}
\end{proposition}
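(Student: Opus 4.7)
The plan is to compute $\ind(f;z_0)$ directly from the definition, i.e., as the winding $W(f;\gamma_r)$ of $f$ along the positively oriented circle $\gamma_r(t) = z_0 + re^{it}$, $t \in \cc{0,2\pi}$, for sufficiently small $r > 0$.  Along $\gamma_r$, the local decomposition gives
\begin{equation*}
f(\gamma_r(t)) = r^n\bigl(a_n e^{int} + \conj{b_n}\, e^{-int}\bigr) + r^{n+1} R(r,t),
\end{equation*}
where $R$ is continuous and bounded as $r \to 0$.  The first step is to understand the winding of the leading term $L(t) \defby a_n e^{int} + \conj{b_n}\, e^{-int}$, and the second step is to show that the remainder does not affect the winding for small $r$.

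For the leading term I would split into the two cases according to which modulus dominates.  If $\abs{a_n} > \abs{b_n}$ (in particular $a_n \neq 0$), I factor
\begin{equation*}
L(t) = a_n e^{int}\Bigl(1 + \tfrac{\conj{b_n}}{a_n}\, e^{-2int}\Bigr).
\end{equation*}
The second factor has modulus bounded below by $1 - \abs{b_n/a_n} > 0$, so it never vanishes and returns to its initial value as $t$ runs from $0$ to $2\pi$; hence it contributes winding $0$.  Thus $W(L;\cc{0,2\pi}) = n$.  If $\abs{a_n} < \abs{b_n}$, I factor out $\conj{b_n}\, e^{-int}$ instead, obtaining winding $-n$ by the same argument.  (The cases where one of $a_n, b_n$ vanishes fit into the same dichotomy.)

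To pass from the leading term to $f$ itself, I would invoke a Rouch\'e-type homotopy argument for winding of continuous functions: in the case $\abs{a_n} > \abs{b_n}$, $\abs{L(t)} \geq r^n(\abs{a_n} - \abs{b_n})$, while the remainder is $O(r^{n+1})$; for $r$ small the straight-line homotopy $s \mapsto L(t) + s \cdot r R(r,t)$ never hits $0$, so $W(f\circ \gamma_r;\cc{0,2\pi}) = W(L;\cc{0,2\pi}) = n$.  The case $\abs{a_n} < \abs{b_n}$ is analogous and yields $-n$, proving~\eqref{eqn:index_zero_in_N}.  The special case~\eqref{eqn:index_nonsingular_zero} then follows from the definition of sense-preserving/reversing: with $n=1$ one has $a_1 = h'(z_0)$ and $b_1 = g'(z_0)$, so $J_f(z_0) = \abs{a_1}^2 - \abs{b_1}^2$ and the sign of the Jacobian dictates which inequality of moduli holds.

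The only delicate point is the bookkeeping when $\abs{a_n} = 0$ or $\abs{b_n} = 0$, so that the factoring argument degenerates; but in that case the leading term is already a pure power $a_n(z-z_0)^n$ or $\conj{b_n(z-z_0)^n}$, whose winding on $\gamma_r$ is manifestly $\pm n$, and the remainder is absorbed by the same homotopy estimate.  Overall I expect no serious obstacle: the content is the Rouch\'e-type stability of winding under lower-order perturbations, which is standard (cf.~\cite[Sect.~2.3]{Balk1991}).
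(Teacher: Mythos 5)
Your argument is correct and essentially complete. Note that the paper does not prove Proposition~\ref{prop:poincare_index_zeros} at all: the statement is quoted from the literature (Duren--Hengartner--Laugesen and Suffridge--Thompson), so you are supplying a self-contained proof where the authors only give a citation. Your route --- restrict $f$ to the small circle $\gamma_r$, isolate the leading homogeneous term $r^n\bigl(a_ne^{int}+\conj{b_n}e^{-int}\bigr)$, compute its winding by factoring out the dominant exponential, and absorb the $O(r^{n+1})$ tail by a Rouch\'e-type homotopy that never passes through $0$ --- is the standard argument and it works; the uniform bound on the remainder follows from writing the tails of $h$ and $g$ as $(z-z_0)^{n+1}$ times analytic functions. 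One sentence should be tightened: for the factor $1+\tfrac{\conj{b_n}}{a_n}e^{-2int}$ you infer winding $0$ from ``never vanishes and returns to its initial value,'' which by itself is not sufficient (any nonvanishing closed loop returns to its starting value, e.g.\ $e^{2int}$ itself). The correct justification, which your modulus bound already provides, is that this factor takes values in the closed disk of radius $\abs{b_n/a_n}<1$ about $1$, hence in the half-plane $\re w>0$, so a continuous branch of its argument stays in $(-\pi/2,\pi/2)$ and its net change over $\cc{0,2\pi}$ is $0$. With that one-line repair the proof of~\eqref{eqn:index_zero_in_N} is complete, and your deduction of~\eqref{eqn:index_nonsingular_zero} from $J_f(z_0)=\abs{a_1}^2-\abs{b_1}^2$ (which forces $n=1$ in the non-singular case) is also correct.
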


A zero $z_0$ of a harmonic 
mapping $f$ with $\ind(f; z_0) \in \Z\setminus\{-1,1\}$ is a singular zero by 
the above result.
Proposition~\ref{prop:poincare_index_zeros} covers non-singular zeros and the 
zeros in $\cM$; see~\eqref{eqn:omega_neq_1}.
If $\abs{a_n} = \abs{b_n} \neq 0$, then $z_0$ is a singular 
zero in $\cC \setminus \cM$,
in which case the determination of the index is more challenging;
see~\cite{LuceSete2017} for the special case $f(z) = h(z) - \conj{z}$.

\begin{remark} \label{rem:zeros_in_M}
Zeros of $f$ in $\cM$ can be interpreted as multiple zeros of $f$.
For a zero $z_0 \in \cM$ of $f$, there 
exists $r > 0$ such that $f$ is defined, non-zero and either sense-preserving 
or sense-reversing in $D = \{ z \in \C : 0 < \abs{z-z_0} \leq r \}$.
For $0 < \abs{\eps} < m = \min_{\abs{z-z_0} = r} \abs{f(z)}$ and $z$ with
$\abs{z - z_0} = r$ we have
\begin{equation*}
\abs{f(z) + \eps - f(z)} = \abs{\eps} < m \leq \abs{f(z)},
\end{equation*}
which implies $W(f + \eps; \gamma) = W(f; \gamma) = \ind(f; z_0)$ by Rouch\'e's 
theorem; see e.g.~\cite[Thm.~2.3]{SeteLuceLiesen2015a}.
Since $f + \eps$ has no poles in $\overline{D}$ and $f(z_0) + \eps \neq 0$, it 
has $\abs{\ind(f; z_0)}$ many distinct zeros in $D$ by the 
argument principle.
\end{remark}

Isolated exceptional points where $f$ is not defined are classified according 
to the limit $\lim_{z \to z_0} f(z)$; 
see~\cite[Def.~2.1]{SuffridgeThompson2000}, \cite[p.~44]{Sheil-Small2002}, and 
the classical notions for real-valued harmonic functions, e.g.~\cite[\S 
15.3, III]{Henrici1986}.

\begin{definition}\label{def:pole}
Let $f$ be a harmonic mapping in a punctured disk around $z_0 \in \C$.
Then $z_0$ is called
\begin{compactenum}
\item a \emph{removable singularity} of $f$, if $\lim_{z \to z_0} f(z) = c \in 
\C$,
\item a \emph{pole} of $f$, if $\lim_{z \to z_0} f(z) = \infty$,
\item an \emph{essential singularity} of $f$, if $\lim_{z \to z_0} f(z)$ does 
not exist.
\end{compactenum}
\end{definition}

If one defines $f(z_0) = c$ at a removable singularity, then $f$ is harmonic 
in~$z_0$; apply~\cite[Thm.~15.3d]{Henrici1986} to the real and imaginary parts 
of $f$.  In the sequel, we assume that removable singularities have been 
removed.
If $c = 0$, then $z_0$ is a zero of $f$, and still an exceptional point.

For most poles of harmonic mappings, the Poincar\'e index can be 
determined from the decomposition~\eqref{eqn:local_decomp_sing}.

\begin{proposition}\label{prop:poincare_index_poles}
Let $f$ be a harmonic mapping in a punctured neighborhood of $z_0$, such that
the local decomposition~\eqref{eqn:local_decomp_sing} is of the form
\begin{equation*}
f(z) = \sum_{\mathclap{k=-n}}^\infty a_k (z-z_0)^k
+ \conj{\sum_{\mathclap{k=-n}}^\infty b_k (z-z_0)^k} + c \log\abs{z-z_0},
\end{equation*}
where $a_{-n}$ or $b_{-n}$ can be zero, then
\begin{equation*}
\ind(f; z_0) = \begin{cases}
-n & \text{ if } n \geq 1 \text{ and } \abs{a_{-n}} > \abs{b_{-n}}, \\
+n & \text{ if } n \geq 1 \text{ and } \abs{a_{-n}} < \abs{b_{-n}}, \\
\phantom{+}0 & \text{ if } n = 0 \text{ and } c \neq 0.
\end{cases}
\end{equation*}
Moreover, in each case $z_0$ is a pole of $f$.
In the first case, $f$ is sense-preserving near $z_0$, and in the second it is 
sense-reversing near $z_0$.
In the third case, $z_0$ is an accumulation point of the critical set of $f$.
\end{proposition}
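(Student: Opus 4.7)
The plan is to compute $\ind(f; z_0)$ directly from its definition as the winding of $f$ on a small positively oriented circle $\gamma_\rho(\theta) = z_0 + \rho e^{i\theta}$, $\theta \in \cc{0, 2\pi}$, and to then let $\rho \to 0$. Substituting the parametrization into the local decomposition, I would reorganize the values as
$$f(\gamma_\rho(\theta)) = \rho^{-n} e^{-in\theta} \bigl( a_{-n} + \overline{b_{-n}}\, e^{i 2n\theta} \bigr) + c \log \rho + R(\rho, \theta),$$
where $R(\rho, \theta)$ collects all remaining terms and satisfies $|R(\rho, \theta)| = O(\rho^{-n+1})$ uniformly in $\theta$ as $\rho \to 0$. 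This cleanly separates the three cases.

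For cases 1 and 2 ($n \geq 1$), the inner factor $F_0(\theta) := a_{-n} + \overline{b_{-n}}\, e^{i 2n\theta}$ traces a circle of radius $|b_{-n}|$ centered at $a_{-n}$. The hypothesis $|a_{-n}| \neq |b_{-n}|$ gives the uniform lower bound $|F_0(\theta)| \geq \bigl| |a_{-n}| - |b_{-n}| \bigr| > 0$, while both the $c\log\rho$ term and $R$ are $o(\rho^{-n})$ relative to $\rho^{-n} F_0$. A straight-line homotopy (the continuous analogue of Rouch\'e's theorem) then shows that for sufficiently small $\rho$ the winding of $f \circ \gamma_\rho$ about $0$ equals that of $\theta \mapsto \rho^{-n} e^{-in\theta} F_0(\theta)$. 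The factor $e^{-in\theta}$ contributes winding $-n$; $F_0$ contributes winding $0$ when the origin lies outside its circle (i.e.\ $|a_{-n}| > |b_{-n}|$) and winding $+2n$ when it lies inside (using that $e^{i 2n\theta}$ makes $2n$ complete revolutions as $\theta$ ranges over $\cc{0, 2\pi}$). Summing gives $\ind(f; z_0) = -n$ in case 1 and $-n + 2n = +n$ in case 2.

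For case 3 ($n = 0$, $c \neq 0$), the log term dominates: on $\gamma_\rho$ we have $f = c \log \rho + O(1)$ uniformly in $\theta$, so for $\rho$ small the curve $f \circ \gamma_\rho$ lies in a bounded region that does not contain $0$ (since $|c \log \rho| \to \infty$), giving winding $0$. The pole property now falls out cheaply in all three cases: in cases 1 and 2 the lower bound on $|F_0|$ combined with the factor $\rho^{-n}$ shows $|f| \to \infty$, and in case 3 this is immediate. For the sense of $f$ in cases 1 and 2, the Wirtinger derivatives satisfy $\partial_z f \sim -n\, a_{-n}(z-z_0)^{-n-1}$ and $\partial_{\bar z} f \sim -n\, \overline{b_{-n}}\,(\overline{z-z_0})^{-n-1}$, so $J_f \sim n^2\bigl(|a_{-n}|^2 - |b_{-n}|^2\bigr)|z-z_0|^{-2n-2}$ carries the stated sign in a punctured neighborhood of $z_0$. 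For the accumulation claim in case 3, I would write $\omega(z) = \bigl(\bar c + 2(z-z_0)\, g'(z)\bigr)/\bigl(c + 2(z-z_0)\, h'(z)\bigr)$, which extends continuously to $z_0$ with value $\bar c/c$ of modulus $1$; either $\omega$ is locally constant (making the whole punctured neighborhood critical), or the open mapping theorem forces $\omega$ to take values on both sides of the unit circle in every neighborhood, so by continuity $|\omega| = 1$ is attained arbitrarily close to $z_0$.

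The main obstacle is executing the Rouch\'e-type homotopy rigorously in this non-analytic setting, in particular verifying $|R(\rho, \theta)| = o\bigl(\rho^{-n} |F_0(\theta)|\bigr)$ uniformly in $\theta$. This relies essentially on the hypothesis $|a_{-n}| \neq |b_{-n}|$, which is exactly the condition that separates cases 1 and 2 and provides the uniform lower bound needed for the homotopy to miss the origin.
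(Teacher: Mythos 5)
Your argument is correct, but it takes a genuinely different route from the paper: the paper's proof of the first two cases and of $\ind(f;z_0)=0$ in the third case consists entirely of citations to Suffridge--Thompson (their Lemmas~2.2--2.4 and pp.~70--71), and only the accumulation-point claim is argued directly, by noting that $\omega$ continues analytically to $z_0$ with $\abs{\omega(z_0)}=1$. You instead give a self-contained computation: parametrize a small circle, factor out $\rho^{-n}e^{-in\theta}$, bound the remainder (finitely many intermediate negative powers give $O(\rho^{-n+1})$, the regular part is $O(1)$, and $c\log\rho=o(\rho^{-n})$ for $n\geq 1$), and run a straight-line homotopy that is legitimate precisely because $\abs{F_0(\theta)}\geq\bigl|\abs{a_{-n}}-\abs{b_{-n}}\bigr|>0$; the winding then splits as $-n$ plus $0$ or $2n$ according to whether the origin lies outside or inside the circle traced ($2n$ times) by $F_0$. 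This matches what the cited lemmas deliver, at the cost of a page of computation and with the benefit of not outsourcing the key step. Two small points to tighten when writing it up: (i) in the sense-of-orientation computation, the logarithmic term contributes $c/(2(z-z_0))$ to $\partial_z f$ and $c/(2\conj{(z-z_0)})$ to $\partial_{\conj{z}}f$, which is dominated by the $(z-z_0)^{-n-1}$ leading terms only because $n\geq 1$, so this should be said explicitly; (ii) your case-3 argument for the accumulation point (extend $\omega$ to $z_0$ with value $\conj{c}/c$ of modulus $1$, then either $\omega$ is constant or the open mapping theorem forces the level set $\abs{\omega}=1$ to meet every punctured neighborhood) actually supplies the justification that the paper's one-line ``and hence'' leaves implicit, which is a genuine improvement in completeness.
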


\begin{proof}
See~\cite[Lem.~2.2, 2.3, 2.4]{SuffridgeThompson2000} for the first two cases.
In the third case, we have $\ind(f; z_0) = 0$
by~\cite[pp.~70--71]{SuffridgeThompson2000}.
Moreover, $\omega$ can be continued analytically to $z_0 \notin \Omega$ with
$\abs{\omega(z_0)} = \lim_{z \to z_0} \abs{\omega(z)} = 1$, since
$\partial_z f(z) = \frac{c}{2} \frac{1}{z-z_0} + \sum_{k=1}^\infty a_k k 
(z-z_0)^{k-1}$ and $\partial_{\conj{z}} f(z) = \frac{\conj{c}}{2} 
\frac{1}{z-z_0} + \sum_{k=1}^\infty b_k k (z-z_0)^{k-1}$.
Hence $z_0$ is an accumulation point of the critical set of $f$ by the maximum 
modulus principle for $\omega$.
\end{proof}

\begin{remark} \label{rem:singularities_on_crit}
If $n \geq 1$ and $\abs{a_{-n}} = \abs{b_{-n}} \neq 0$, we have that:
\begin{enumerate}
\item $z_0$ is an accumulation point of the critical set of $f$, as in the 
proof,

\item $z_0$ is a pole or an essential singularity of $f$, and both cases occur.
Consider $f_1(z) = z^{-2} + z^{-1} + \conj{z}^{-2}$ and
$f_2(z) = z^{-2} + z + \conj{z}^{-2}$, for which $z_0 = 0$ is an isolated 
exceptional point.
The origin is a pole of $f_1$, since $\lim_{z \to 0} f_1(z) = \infty$, and
$\ind(f_1; 0) = 0$; see~\cite[Ex.~2.6]{SuffridgeThompson2000}.
In contrast, $\lim_{z \to 0} f_2(z)$ does not exist (compare the limits on the 
real axis and the lines with $\re(z^{-2}) = 0$), i.e., $f_2$ has an essential 
singularity at $0$.
\end{enumerate}
\end{remark}

\section{The number of pre-images}\label{sect:global_valence} 

For non-degenerate harmonic mappings $f$, 
we derive explicit formulas for the number of pre-images
of 
a non-caustic point $\eta$, in terms of the poles of $f$
and of the winding 
number of the caustics of $f$ about $\eta$. The proofs are based on the argument principle.  
Moreover, we deduce geometrically the number of pre-images from the caustics.

\begin{definition}\label{def:non-degenerate}
We call a harmonic mapping $f$ \emph{non-degenerate}, if the following 
conditions hold:
\begin{enumerate}
\item $f$ is defined in $\widehat{\C}$ with the possible exception of finitely 
many 
poles,

\item at a pole $z_0 \in \C$ of $f$, the 
decomposition~\eqref{eqn:local_decomp_sing} 
has the form
\begin{equation} \label{eqn:near_pole}
f(z) = \sum_{\mathclap{k = -n}}^\infty a_k(z-z_0)^k + \conj{\sum_{\mathclap{k = 
-n}}^\infty b_k(z-z_0)^k} + c\log\abs{z-z_0},
\end{equation}
with $n \geq 1$ and $\abs{a_{-n}} \neq \abs{b_{-n}}$.
And if $\infty$ is a pole of $f$, then 
\begin{equation} \label{eqn:near_pole_infty}
f(z) = \sum_{\mathclap{k = -\infty}}^n a_k z^k + \conj{\sum_{\mathclap{k = 
-\infty}}^n b_k z^k} + c \log \abs{z}, \quad \text{for } \abs{z} > R,
\end{equation}
with $n \geq 1$ and $\abs{a_n} \neq \abs{b_n}$, and $R > 0$,

\item the critical set $\cC$ of $f$ is bounded.
\end{enumerate}
\end{definition}

\begin{remark}
\begin{enumerate}
\item 
Item~1 in Definition~\ref{def:non-degenerate} allows to apply the 
argument principle globally.
By~2., we can determine the Poincar\'e index of a pole with 
Proposition~\ref{prop:poincare_index_poles}, and the poles are not accumulation 
points of $\cC$; see Remark~\ref{rem:singularities_on_crit}.
In particular, $\cC$ is a closed subset of $\C$.

\item 
Harmonic polynomials $f(z) = p(z) + \conj{q(z)}$ with $\deg(p) > \deg(q)$, and 
rational harmonic mappings $f(z) = r(z) - \conj{z}$ with $\lim_{z \to \infty} 
f(z) = \infty$ are non-degenerate.
For these functions, the number of zeros or pre-images is intensively studied; 
see e.g.~\cite{Wilmshurst1998, KhavinsonSwiatek2003, KhavinsonNeumann2006, 
Geyer2008,BleherHommaJiRoeder2014, LuceSeteLiesen2014a, LuceSeteLiesen2014b, 
SeteLuceLiesen2015a,LeeLerarioLundberg2015, LiesenZur2018a, LiesenZur2018b, 
BeneteauHudson2018, KhavinsonLeeSaez2018}.

\item
We discuss the difference between non-degenerate harmonic mappings and the maps 
in~\cite{Lyzzaik1992, Neumann2005}.
By~\cite[Thm.~2.1]{Lyzzaik1992}, a harmonic mapping is either (a) light,
(b) has a zero Jacobian, or (c) is constant on an analytic subarc of
$\cC \setminus \cM$.
While Lyzzaik~\cite{Lyzzaik1992} and Neumann~\cite{Neumann2005} consider
harmonic mappings that are light (case (a)) and have no poles,
we allow cases (a) and (c) and certain poles.
For example, the harmonic mapping $f(z) = \frac{1}{z} - \conj{z}$,
modeling the Chang-Refsdal lens in gravitational lensing~\cite{AnEvans2006},
is non-degenerate with poles at $0$ and $\infty$, and with critical set $\cC = 
\{ z \in \C : \abs{z} = 1 \}$.  It is not light, since $f(\cC) = \{ 0 \}$.

\item It is possible that different arcs of the critical set are mapped onto 
the same caustic arc; see Example~\ref{ex:multiple_caustic_arc}.
\end{enumerate}
\end{remark}

\subsection{A formula for the number of pre-images}
\label{sect:number_of_preimages}

To count the number of pre-images under $f$ with the argument principle, we 
separate the regions where $f$ is sense-preserving and sense-reversing.

Let $f$ be a non-degenerate harmonic mapping.  In particular, the critical set 
$\cC$ is bounded and closed.
For each connected component $\Gamma$ of $\cC \setminus \cM$, we construct a 
single 
closed curve $\gamma$ parametrizing $\Gamma$ and traveling through every 
critical arc exactly once, according to~\eqref{eqn:parametrization}.
There are two possibilities.
\begin{enumerate}
\item If $\omega'$ is non-zero on $\Gamma$, then $\Gamma$ is the trace of a 
closed Jordan curve $\gamma$.
\item If $\omega'$ has zeros on $\Gamma$, then $\Gamma$ consists of Jordan 
arcs that meet at the zeros of $\omega'$, and we proceed as follows.
We interpret the component $\Gamma$ as a directed multigraph with 
intersection points as vertices and critical arcs as arcs of the graph, 
directed in the sense of~\eqref{eqn:parametrization}.
At a vertex corresponding to an $(n-1)$-fold zero of $\omega'$, $2n$ arcs meet.
Due to the orientation of the arcs, the same number of arcs are incoming 
and outgoing.
Hence we find an Euler circuit in the graph~\cite[Sect.~I.3]{Bollobas1998},
which corresponds to the desired parametrization $\gamma$ of $\Gamma$.
\end{enumerate}
We call the above $\gamma$ a \emph{critical curve}, and denote the set of all 
these curves by $\crit$; see Figure~\ref{fig:mpw_log} below for examples.

The critical set induces a partition of $\widehat{\C} \setminus \cC$ into open 
and connected components $A$, where $\partial A \subseteq \cC$ and 
$f$ is either sense-preserving or sense-reversing on $A$
(more precisely on $A$ minus the poles of $f$).
Such a component may or may not be simply connected; see 
Figure~\ref{fig:mpw_log} (top left).
Denote the component containing $\infty$ by $A_\infty$.
For $A \neq A_\infty$, note that $\omega$ has at least one zero/pole in $A$ if 
$f$ is sense-preserving/sense-reversing in $A$, by the minimum modulus 
principle/maximum modulus principle for $\omega$.
If $\omega$ is identically zero/infinity, then $f$ is analytic/anti-analytic, 
and there is only one 
component.  Otherwise, $\omega$ has only finitely many zeros and poles 
on the compact set 
$\widehat{\C} \setminus A_\infty$, and there are at most finitely many other 
components,
and we write
\begin{equation} \label{eqn:setA}
\cA = \{ A_1,\dots, A_m\}.
\end{equation}
This generalizes a similar partition for rational harmonic mappings of the form
$f(z) = r(z) - \conj{z}$ from~\cite[Sect.~2]{LiesenZur2018a}.

For $A \in \cA$, we construct parametrizations $\gamma_1,\ldots,\gamma_n$ 
according to~\eqref{eqn:parametrization} of the connected components 
$\Gamma_1,\ldots,\Gamma_n$ of $\Gamma = (\partial A)\setminus \cM$.
If $\omega'$ is non-zero on $\Gamma_j$, then there exists a closed Jordan curve 
$\gamma_j$ with $\tr(\gamma_j) = \Gamma_j$ as before.
Otherwise we interpret $\Gamma_j$ as a directed multigraph and show the 
existence of an Euler circuit as above.
For a zero $z_0 \in \Gamma_j$ of $\omega'$ the set $A_\eps = \{z \in A : 0 < 
\abs{z - z_0} < \eps\}$ consists of $k$ connected components for $\eps >  0$ 
sufficiently small.
Every component of $A_\eps$ produces one ingoing and one outgoing arc at the 
vertex corresponding to $z_0$; see 
Figure~\ref{fig:selfintersection_of_critical_set} (left). 
Hence, there exists an Euler circuit in $\Gamma_j$ and we denote by $\gamma_j$ 
a parametrization according to~\eqref{eqn:parametrization} of this circuit.
Applying the above construction to all $A \in \cA$ yields not necessarily a 
disjoint partition of $\cC\setminus\cM$, see Figure~\ref{fig:mpw_log} (bottom 
left), and hence cannot be used in Theorem~\ref{thm:counting}.
In particular $\gamma_j$ is potentially not a critical curve.

\begin{figure}[t]
{\centering
\usetikzlibrary{decorations.markings}
\begin{tikzpicture}[declare function = {
r=2;
rmin = 0.4;
alpha = 60; % 180/3
},
decoration={markings, mark=at position 0.5 with {\arrow{>}}}]

% Sectors filled in grey
\foreach \phi in { 0, 240 }
\fill[color = lightgray] (0,0) -- (\phi:r) arc (\phi:\phi+alpha:r) -- cycle;

% Outgoing arcs:
\foreach \phi in { 0, 240 }
\draw[thick, postaction={decorate}] (0,0) -- (\phi:r);
% Ingoind arcs:
\foreach \phi in { 60, 300 }
\draw[thick, postaction={decorate}] (\phi:r) -- (0,0);

\draw[thick, dashed, postaction={decorate}] (0,0) -- (120:r);
\draw[thick, dashed, postaction={decorate}] (180:r) -- (0,0);

% Mark sense-preserving regions:
\foreach \phi in { 30, 150, 270 }
\node[] at (\phi:1.3) {$+$};
% Mark sense-reversing regions:
\foreach \phi in { 90, 210, 330 }
\node[] at (\phi:1.3) {$-$};

\draw[] (0,0) circle (r);
\draw[fill=black] (0,0) circle (0.2em);
\node[above] at (0, 0.15) {$z_0$};

\begin{scope}[shift = {(7, 0)}]
% Sectors filled in grey
\fill[color = lightgray] (0,0) -- (0:r) arc (0:60:r) -- cycle;
\fill[color = lightgray] (0,0) -- (240:r) arc (240:300:r) -- cycle;

% Dashed sector:
\draw[thick, dashed, postaction={decorate}] (0,0) -- (120:r);
\draw[thick, dashed, postaction={decorate}] (180:r) -- (0,0);

% Modifed boundary curve:
\draw[thick, postaction={decorate}] (60:r) -- (60:rmin);
\draw[thick] (60:rmin) -- (0:rmin);
\draw[thick, postaction={decorate}] (0:rmin) -- (0:r);

\draw[thick, postaction={decorate}] (300:r) -- (300:rmin);
\draw[thick] (300:rmin) -- (240:rmin);
\draw[thick, postaction={decorate}] (240:rmin) -- (240:r);

% Mark sense-preserving regions:
\foreach \phi in { 30, 150, 270 }
\node[] at (\phi:1.3) {$+$};
% Mark sense-reversing regions:
\foreach \phi in { 90, 210, 330 }
\node[] at (\phi:1.3) {$-$};

\draw[] (0,0) circle (r);
\draw[fill=black] (0,0) circle (0.2em);
\node[above] at (0, 0.15) {$z_0$};
\end{scope}
\end{tikzpicture}
\vspace{2mm}

}
\caption{Left: $A_\eps$ (shaded) and oriented critical arcs near a zero $z_0$ 
of $\omega'$.
Right: Deformation of $\gamma_j$ in the proof of 
Theorem~\ref{thm:counting_on_tile}.  The $+/-$ signs indicate regions where $f$ 
is sense-preserving/sense-reversing.}
\label{fig:selfintersection_of_critical_set}
\end{figure}
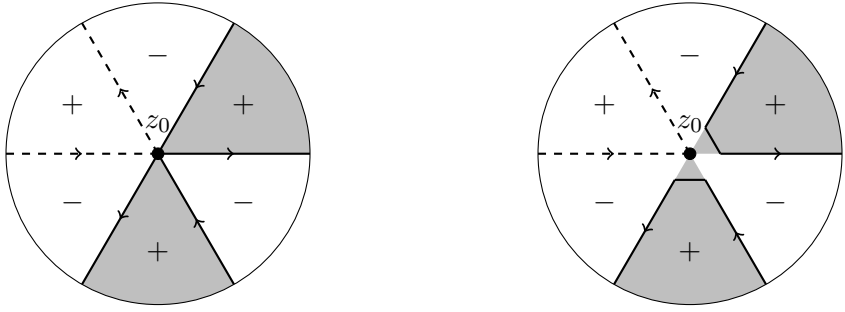

We determine the number of pre-images in one component $A \in \cA$.

\begin{theorem}\label{thm:counting_on_tile}
Let $f$ be a non-degenerate harmonic mapping, $A \in \cA$, and let 
$\gamma_1, \ldots, \gamma_n$ be a parametrization of $\Gamma = (\partial A) 
\setminus \cM$ as above.
Moreover, let  $z_1, \ldots, z_k$ be the poles of $f$ in $A$, and define $P(f; 
A) = \sum_{j=1}^k \abs{\ind(f; z_j)}$.
Then, for $\eta \in \C$ such that $f - \eta$ is non-zero on $\partial A$, the 
number $N_\eta(f; A)$ of pre-images of $\eta$ under $f$ in $A$ is
\begin{equation} \label{eqn:count_on_tile}
N_\eta(f; A) = P(f; A) + \sum_{j=1}^n \wn{f \circ \gamma_j}{\eta}.
\end{equation}
\end{theorem}

\begin{proof}
We apply the argument principle to $f_\eta = f - \eta$ on $A$.
Note that $f_\eta$ is also non-degenerate, $J_f = J_{f_\eta}$, and 
$f_\eta$ has the same poles with same index as $f$,
so that $P(f_\eta; A) = P(f; A)$.
Since $f_\eta$ is non-zero on $\partial A$, it has no zeros in $\cM \cap
\overline{A}$.
Moreover, $f_\eta$ has only finitely many zeros in $A$.  For a bounded $A$ 
this holds since non-singular zeros are 
isolated~\cite[p.~413]{DurenHengartnerLaugesen1996}.
For $A_\infty$, assume that $f_\eta$ has infinitely many zeros in $A_\infty$ 
and hence in some $\{ z \in \C : \abs{z} \geq R \}$.
Then $f_\eta(1/z)$ has infinitely many non-singular zeros in 
$\{ z \in \C : \abs{z} \leq 1/R \}$, which contradicts the fact that 
such zeros are isolated.

First, suppose that $\Gamma$ is non-empty and that
$\gamma_1, \ldots, \gamma_n$ are closed Jordan curves.
If $f_\eta$ is sense-preserving in $A$, then $A$ lies to the left of 
$\gamma_1, \ldots, \gamma_n$.  
The argument principle implies
\begin{equation*}
\sum_{j=1}^{n} W(f_\eta; \gamma_j) = N_0(f_\eta; A) + \sum_{j=1}^k \ind(f_\eta; 
z_j)
= N_\eta(f; A) - P(f; A),
\end{equation*}
where we used that $f_\eta$ is sense-preserving and hence the index at a zero 
is $+1$ by~\eqref{eqn:index_nonsingular_zero} and negative at a pole by
Proposition~\ref{prop:poincare_index_poles}.
We obtain~\eqref{eqn:count_on_tile} in this case with $W(f_\eta; \gamma_j) = 
\wn{f \circ \gamma_j}{\eta}$; see~\eqref{eqn:winding_winding_number}.
Recall that $\wn{f \circ \gamma_j}{\eta} = 0$ if $f \circ \gamma_j$ is 
constant.
If $f_\eta$ is sense-reversing in $A$, then $A$ lies to the right 
of $\gamma_1, \ldots, \gamma_n$, and the index of $f$ at a zero is $-1$ 
by~\eqref{eqn:index_nonsingular_zero} and positive at a pole by 
Proposition~\ref{prop:poincare_index_poles}, and hence
\begin{equation*}
\sum_{j=1}^n W(f_\eta; - \gamma_j)
= -N_\eta(f; A) + P(f; A),
\end{equation*}
where $-\gamma_j$ denote the reversed curves.  Since $W(f; -\gamma_j) = - W(f; 
\gamma_j)$, we obtain~\eqref{eqn:count_on_tile}.

If some $\gamma_j$ is not a Jordan curve, then it self-intersects at a zero 
$z_0$ of $\omega'$, as indicated in 
Figure~\ref{fig:selfintersection_of_critical_set} (left).
However, $f_\eta$ is continuous and non-zero at $z_0$.
Hence, by an arbitrary small manipulation of $\gamma_j$, we obtain a Jordan 
curve on which $f_\eta$ has the same winding.
This is illustrated in Figure~\ref{fig:selfintersection_of_critical_set} 
(right).
The proof then remains unchanged with the new curves.

Finally, if $\Gamma$ is empty, then $A$ is the only component 
in $\cA$ and $N_\eta(f) = P(f)$ follows from 
Theorem~\ref{thm:sum_of_all_indices}.
\end{proof}

Summing over all $A \in \cA$ gives the total number of pre-images.

\begin{theorem}\label{thm:counting}
Let $f$ be a non-degenerate harmonic mapping.  Then $N_\eta(f)$, the number of 
pre-images in $\widehat{\C}$ of $\eta \in \C \setminus f(\cC)$ under $f$, is
\begin{equation*}
N_\eta(f) = P(f) + 2\sum_{\gamma \in \crit} \wn{f \circ \gamma}{\eta}.
\end{equation*}
Here $P(f) = \sum_{A \in \cA} P(f; A)$ denotes the number of poles of $f$ in 
$\widehat{\C}$ counted with the absolute 
values of their Poincar\'e indices, as in Theorem~\ref{thm:counting_on_tile}.
\end{theorem}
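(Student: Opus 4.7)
The plan is to sum the formula from Theorem~\ref{thm:counting_on_tile} over all tiles $A \in \cA$. First I would verify that the hypotheses apply uniformly: since $\eta \notin f(\cC)$, the function $f - \eta$ is non-zero on $\cC \supseteq \partial A$ for every $A \in \cA$ by Proposition~\ref{prop:crit_caus}. Moreover, every pre-image of $\eta$ under $f$ lies in some tile (no pre-image can lie on $\cC$), so $N_\eta(f) = \sum_{A \in \cA} N_\eta(f;A)$. Summing Theorem~\ref{thm:counting_on_tile}, the pole contributions $P(f;A)$ accumulate to $P(f)$ by definition, and the correction term $-\ind(f-\eta;\infty)$ appears exactly once, from $A = A_\infty$.

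The main step is to reorganize the resulting double sum of windings. I would argue that each arc of $\cC \setminus \cM$ separates exactly two tiles---the sense-preserving one on its left and the sense-reversing one on its right---and that the parametrization~\eqref{eqn:parametrization} is intrinsic to the arc, not to either adjacent tile. Consequently, Theorem~\ref{thm:counting_on_tile} assigns the same winding value $\wn{f \circ \alpha}{\eta}$ to both adjacent tiles for each arc $\alpha$, so reordering the double sum arc by arc counts each arc exactly twice. Grouping the arcs back into critical curves and using that each $\gamma \in \crit$ is an Euler circuit traversing every arc in its component of $\cC \setminus \cM$ exactly once, additivity of the change of argument gives $\wn{f \circ \gamma}{\eta} = \sum_{\alpha \subseteq \gamma} \wn{f \circ \alpha}{\eta}$. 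Combining these observations yields the desired factor $2 \sum_{\gamma \in \crit} \wn{f \circ \gamma}{\eta}$, which together with the previous paragraph proves the formula.

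The main obstacle I expect is the bookkeeping at the self-intersection points of the critical curves, namely at zeros of $\omega'$ on $\cC \setminus \cM$ and at isolated points of $\cM$ lying on $\partial A$. Points in $\cM$ are not exceptional for $f - \eta$ (since $\eta \notin f(\cC)$) and can be ignored, as in the proof of Theorem~\ref{thm:counting_on_tile}. At a zero of $\omega'$ several arcs meet, but the Euler-circuit construction of $\crit$ from Section~\ref{sect:crit}, combined with the smoothing argument used at the end of the proof of Theorem~\ref{thm:counting_on_tile}, ensures that the ``each arc is adjacent to exactly two tiles'' picture is preserved and that the per-arc windings decompose into the per-critical-curve windings exactly as needed.
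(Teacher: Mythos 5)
Your proposal is correct and follows essentially the same route as the paper: sum the formula of Theorem~\ref{thm:counting_on_tile} over all $A \in \cA$ and observe that each critical arc, with its intrinsic orientation from~\eqref{eqn:parametrization}, bounds exactly two tiles and therefore contributes its winding twice with the same sign. Your additional bookkeeping at zeros of $\omega'$ and at points of $\cM$ is a more explicit version of what the paper compresses into the single remark that ``every critical curve appears twice in the sum.''
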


\begin{proof}
The function $f_\eta = f - \eta$ has no zeros on $\cC$, since $\eta$ is not 
a caustic point.  Let $\cA = \{A_1,\dots,A_m\}$ and denote by 
$\gamma_{1,j},\ldots, \gamma_{n_j,j}$ a parametrization of $(\partial A_j) 
\setminus \cM$ as above.  Applying 
Theorem~\ref{thm:counting_on_tile} for $A_1,\dots,A_m$ yields
\begin{equation*}
\begin{split}
N_\eta(f) &= \sum_{j = 1}^m N_\eta(f;A_j)
= \sum_{j = 1}^m \bigg(P(f;A_j) +  \sum_{k = 1}^{n_j} 
\wn{f\circ\gamma_{k,j}}{\eta} \bigg) \\
&= P(f) + 2\sum_{\gamma \in \crit} \wn{f \circ \gamma}{\eta}.
\end{split}
\end{equation*}
Here we used that every $\gamma_{k,j}$ consists of arcs which are boundary arcs 
of exactly two components in $\cA$, and that the critical curves are a 
(disjoint) parametrization of $\cC\setminus\cM$ according 
to~\eqref{eqn:parametrization}.
\end{proof}

\begin{remark} \label{rem:counting_formula}
Theorems~\ref{thm:counting_on_tile} and~\ref{thm:counting} not only contain a 
formula for counting the pre-images of $\eta$, but also allow to determine how 
the number of pre-images changes if $\eta$ changes its position relative to the 
caustics of $f$.
More precisely, the number of pre-images in $A \in \cA$ changes by $\pm 1$ if 
$\eta$ ``crosses'' a single caustic arc from $f(\partial A)$; see 
Theorem~\ref{thm:counting_on_tile}.
\end{remark}

For large enough $\abs{\eta}$, the pre-images are near the poles. This 
generalizes~\cite[Thm.~3.1]{LiesenZur2018a}.
We write $D_\eps(z_0) = \{ z \in \C : \abs{z-z_0} < \eps \}$.

\begin{theorem}\label{thm:large_eta}
Let $f$ be a non-degenerate harmonic mapping with poles 
$z_1,\dots,z_n$, let $\eps > 0$ be such that the sets $D_\infty = \{z \in 
\C : 
\abs{z} > \eps^{-1}\}$ and $D_{\eps}(z_1), \dots, D_{\eps}(z_n)$ are 
disjoint, and such that on each set $f$ is either sense-preserving or 
sense-reversing.
Then, for every $\eta \in \C$ with $\abs{\eta}$ large enough, we have
\begin{equation*}
N_\eta(f; D_{\eps}(z_k)) = \abs{\ind(f;z_k)} 
\quad \text{and} \quad 
N_\eta(f; D_\infty) = \abs{\ind(f - \eta; \infty)}.
\end{equation*}
Moreover, all pre-images of $\eta$ are in $D = \cup_{k=1}^n D_{\eps}(z_k) \cup 
D_\infty$.
\end{theorem}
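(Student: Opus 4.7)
The plan is a three-fold application of the argument principle (Theorem~\ref{thm:winding}), using the fact that whenever $f$ is bounded on a curve $\gamma$ and $|\eta|$ is much larger than $\max_{\gamma}|f|$, the image $(f-\eta)(\gamma)$ lies in a half-plane missing $0$ and hence winds zero times about $0$.

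Ruling out pre-images in the middle is immediate: the set $K = \{z\in\C : |z|\leq \eps^{-1}\} \setminus \bigcup_{k=1}^n D_\eps(z_k)$ is compact and disjoint from the poles of $f$, so $f$ is bounded on~$K$, and for $|\eta| > \max_{K}|f|$ the point $\eta$ is not in $f(K)$, which contains the open set $\C \setminus (\bigcup_k D_\eps(z_k) \cup D)$.

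For each disk $D_\eps(z_k)$, positively oriented, I would invoke Theorem~\ref{thm:winding}. By the choice of $\eps$, the critical set is disjoint from $D_\eps(z_k) \setminus \{z_k\}$, so $f$ (and thus $f_\eta = f - \eta$) is of a single orientation throughout the punctured disk; by~\eqref{eqn:index_nonsingular_zero} every zero of $f_\eta$ there has index $+1$ or $-1$ accordingly, while the pole $z_k$ contributes $\ind(f;z_k) = \ind(f_\eta;z_k)$, of the opposite sign by Proposition~\ref{prop:poincare_index_poles}. For $|\eta|$ sufficiently large, the winding of $f_\eta$ on $\partial D_\eps(z_k)$ vanishes, so Theorem~\ref{thm:winding} yields $N_\eta(f; D_\eps(z_k)) = |\ind(f;z_k)|$ in both orientation cases. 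The same scheme handles $D$, viewed as a neighborhood of $\infty$ on~$\widehat{\C}$: its boundary $\{|z|=\eps^{-1}\}$ must be oriented clockwise in $\C$ so that $D$ lies on its left in~$\widehat{\C}$. Since $f$ is sense-preserving on $D$ by non-degeneracy, every zero of $f_\eta$ there contributes $+1$, while $\infty$ is the only remaining exceptional point in $D$ with index $\ind(f_\eta;\infty)$; for $|\eta|$ large the boundary winding is again $0$, so Theorem~\ref{thm:winding} gives $N_\eta(f;D) = -\ind(f_\eta;\infty)$, with non-negativity automatic since $N_\eta(f;D)\geq 0$.

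The argument is not conceptually deep; the only mild obstacle is the sign bookkeeping—checking that in both orientation cases the sign of $\ind(f;z_k)$ provided by Proposition~\ref{prop:poincare_index_poles} cancels that of the zero indices to produce $|\ind(f;z_k)|$, and that the orientation of $\partial D$ on the Riemann sphere is set up correctly so that Theorem~\ref{thm:winding} applies as stated.
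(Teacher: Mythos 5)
Your proof is correct, and for the two displayed equalities it is essentially the paper's argument: the observation that $(f-\eta)(\gamma)$ stays in a half-plane missing the origin for $\abs{\eta} > \max_\gamma \abs{f}$ is exactly the Rouch\'e comparison of $f_\eta$ with the constant $-\eta$ that the paper uses, and the subsequent index bookkeeping (zeros of index $\pm 1$ cancelling against the opposite-signed pole index from Proposition~\ref{prop:poincare_index_poles}, and the clockwise orientation of $\partial D$ so that Theorem~\ref{thm:winding} applies on $\widehat{\C}$) is identical. Where you genuinely diverge is the final claim: the paper deduces the absence of pre-images in the middle region by enlarging $\abs{\eta}$ until it lies outside all caustics and then comparing the global count from Theorem~\ref{thm:counting} with the sum of the local counts, whereas you argue directly that $K = \{z : \abs{z} \le \eps^{-1}\} \setminus \bigcup_k D_\eps(z_k)$ is compact and pole-free, so $f(K)$ is bounded and cannot contain any sufficiently large $\eta$. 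Your route is more elementary and self-contained (it does not rely on the counting formula or on the caustics being bounded), at the cost of not exhibiting the consistency between the local counts and the global formula, which is the structural point the paper is making; both are valid, and the thresholds on $\abs{\eta}$ from the three steps are reconciled by taking their maximum.
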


\begin{proof}
Let $\eta \in \C$ be such that 
$\abs{f(z)} < \abs{\eta}$ for $z \in \partial D$,
which is possible since $\partial D$ is compact and $f$ continuous.
To apply Rouch\'e's theorem (e.g.~\cite[Thm.~2.3]{SeteLuceLiesen2015a}) to 
$f_\eta = f - \eta$ and $g(z) = - \eta$, note 
that
\begin{equation*}
\abs{f_\eta(z) - g(z)} = \abs{f(z)} < \abs{\eta}
\quad \text{for } z \in \partial D.
\end{equation*}
Since $f$ is either sense-preserving or sense-reversing on $D_{\eps}(z_k)$, 
we have
\begin{equation*}
0 = W(g; \gamma_k) = W(f_\eta; \gamma_k) = \pm 
N_\eta(f; D_\eps(z_k)) + \ind(f;z_k),
\end{equation*}
with $\gamma_k : \cc{0, 2\pi} \to \C$, $\gamma_k(t) = z_k + \eps e^{it}$.
Hence, $N_\eta(f; D_{\eps}(z_k)) = \abs{\ind(f;z_k)}$ as in 
Theorem~\ref{thm:counting_on_tile}.
Similarly, let $\gamma_\infty : \cc{0, 2\pi} \to \C$, 
$\gamma_\infty(t) = \eps^{-1} e^{-it}$, then
\begin{equation*}
0 = W(g; \gamma_\infty) = W(f_\eta; \gamma_\infty)
= \pm N_\eta(f; D_\infty) + \ind(f_\eta; \infty).
\end{equation*}

By increasing $\abs{\eta}$, so that $\eta$ lies outside all 
caustics, i.e., $\wn{f \circ \gamma}{\eta} = 0$ for all $\gamma 
\in \crit$, we have with Theorem~\ref{thm:counting}
\begin{equation*}
N_\eta(f) = P(f)
= \sum_{j=1}^n N_\eta(f; D_{\eps_j}(z_j)) + N_\eta(f;D_\infty).
\end{equation*}
This implies that all pre-images of $\eta$ are in $D$.
\end{proof}

Note that the number of pre-images determined in Theorem~\ref{thm:large_eta} is 
not necessarily the minimal number of pre-images as $\eta$ ranges over $\C 
\setminus f(\cC)$; see Example~\ref{ex:log} and Figure~\ref{fig:mpw_log}.
For non-singular harmonic polynomials, however, this is the lower bound for the 
number of zeros; see the discussion at the beginning of 
Section~\ref{sect:polynomials}.

We now consider $\eta$ as \emph{variable parameter}, and deduce the number of
pre-images of $\eta_2$ from the number of pre-images of another point 
$\eta_1$, e.g., with sufficiently large $\abs{\eta_1}$ as in 
Theorem~\ref{thm:large_eta}.

The caustics induce a partition of $\C \setminus f(\cC)$ into open and 
connected components, which we call \emph{caustic tiles}. 
This partition does not coincide with $f(\cA)$ in general, since
$f$ has not the open mapping property; see also Figure~\ref{fig:mpw_log}, where 
$\widehat{\C} \setminus \cC$ and $\C \setminus f(\cC)$ have a different number 
of (connected) components.
The winding number 
of $f\circ\gamma$ about $\eta$ depends on the position of $\eta$ with respect 
to the caustics, i.e., to which caustic tile $\eta$ belongs to.
The next theorem is an immediate and very useful consequence of 
Theorem~\ref{thm:counting}.

\begin{theorem} \label{thm:relative_counting}
For a non-degenerate harmonic mapping $f$ and non-caustic points 
$\eta_1, \eta_2 \in \C \setminus f(\cC)$, we have
\begin{equation} \label{eqn:relative_counting}
N_{\eta_2}(f) = N_{\eta_1}(f) + 2 \sum_{\gamma \in \crit} \big( \wn{f \circ 
\gamma}{\eta_2} - \wn{f \circ \gamma}{\eta_1} \big),
\end{equation}
and in particular:
\begin{enumerate}
\item If $\eta_1$ and $\eta_2$ are in the same caustic tile, then the number 
of pre-images under $f$ is the same, i.e., $N_{\eta_2}(f) = N_{\eta_1}(f)$.

\item If $\eta_1$ and $\eta_2$ are separated by a single caustic $f \circ 
\gamma$, then the number of pre-images under $f$ changes by two, i.e., 
$N_{\eta_2}(f) = N_{\eta_1}(f) \pm 2$.

\item $N_{\eta_1}(f)$ is odd if, and only if, $N_{\eta_2}(f)$ is odd.

\item Let $\eta_1, \eta_2 \in \C$.  If $N_{\eta_1}(f)$ is even and 
$N_{\eta_2}(f)$ is odd, then $\eta_1$ or $\eta_2$ is a caustic point of $f$.
\end{enumerate} 
\end{theorem}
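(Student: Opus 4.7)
The plan is to apply Theorem~\ref{thm:counting} to both $\eta_1$ and $\eta_2$ and subtract. This yields
\begin{equation*}
N_{\eta_2}(f) - N_{\eta_1}(f) = 2\sum_{\gamma \in \crit}\bigl(\wn{f\circ\gamma}{\eta_2} - \wn{f\circ\gamma}{\eta_1}\bigr) - \bigl(\ind(f-\eta_2;\infty) - \ind(f-\eta_1;\infty)\bigr),
\end{equation*}
so proving \eqref{eqn:relative_counting} reduces to verifying the identity $\ind(f-\eta_1;\infty) = \ind(f-\eta_2;\infty)$. This is exactly where the hypothesis $\abs{\eta_2-\eta_1} < \abs{c-\eta_1}$ is needed, and I would establish it by a case split on $c \in \widehat{\C}$. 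If $c = \infty$, then $f$ has a pole at $\infty$ and subtracting a constant leaves the leading terms in~\eqref{eqn:local_decomp_sing} unchanged, so Proposition~\ref{prop:poincare_index_poles} combined with~\eqref{eqn:index_infty_phi} gives $\ind(f-\eta_i;\infty) = \ind(f;\infty)$ for $i=1,2$. If instead $c \in \C$, the triangle inequality yields $\abs{c-\eta_2} \geq \abs{c-\eta_1} - \abs{\eta_1-\eta_2} > 0$, so $c \neq \eta_1,\eta_2$; then $f-\eta_i$ tends to a nonzero limit at $\infty$, hence is homotopic there to a nonzero constant, and $\ind(f-\eta_i;\infty) = 0$ for both $i$. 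This completes the proof of~\eqref{eqn:relative_counting}.

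From the formula the four consequences follow by essentially formal arguments. For (1), if $\eta_1$ and $\eta_2$ lie in the same connected component of $\C \setminus f(\cC)$, then for every $\gamma \in \crit$ the trace $\tr(f\circ\gamma) \subseteq f(\cC)$ does not separate them, so $\wn{f\circ\gamma}{\cdot}$ takes the same value on both by local constancy of the winding number; the sum vanishes. For (2), crossing a single caustic arc corresponds to crossing a fold arc of some single $f\circ\gamma_0$, changing $\wn{f\circ\gamma_0}{\cdot}$ by exactly $\pm 1$ and leaving all other summands untouched (their traces are not crossed), so the right-hand side contributes $\pm 2$. For (3), the right-hand side of~\eqref{eqn:relative_counting} differs from $N_{\eta_1}(f)$ by an even integer, so $N_{\eta_1}$ and $N_{\eta_2}$ share parity. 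For (4), I would use the contrapositive of (3): if the parities disagree, $\eta_1$ and $\eta_2$ cannot both lie in $\C \setminus f(\cC)$, so some $\eta_j \in f(\cC)$, and Proposition~\ref{prop:crit_caus} supplies the singular zero of $f_{\eta_j}$.

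The main obstacle is the equality of the two Poincar\'e indices at $\infty$; the hypothesis $\abs{\eta_2-\eta_1} < \abs{c-\eta_1}$ is precisely what couples the two cases $c \in \C$ and $c = \infty$ into a single uniform statement, and once that is handled everything else is bookkeeping with winding numbers and parities.
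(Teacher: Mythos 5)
Your proposal is correct and follows the same skeleton as the paper's proof: apply Theorem~\ref{thm:counting} to $\eta_1$ and $\eta_2$, observe that $P(f)$ is independent of $\eta$, and reduce everything to the single identity $\ind(f-\eta_1;\infty)=\ind(f-\eta_2;\infty)$; the four consequences are then read off exactly as in the paper. The only real difference is how that identity is established. The paper does it in one stroke with Rouch\'e's theorem on a large circle $\gamma_R$: since $f(z)\to c$, the hypothesis $\abs{\eta_2-\eta_1}<\abs{c-\eta_1}$ gives $\abs{f_{\eta_1}(z)-f_{\eta_2}(z)}=\abs{\eta_2-\eta_1}<\abs{f_{\eta_1}(z)}$ for $\abs{z}=R$ large, hence equal windings. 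Your case split on $c$ reaches the same conclusion, and the branch $c\in\C$ is fine as written (note $c\neq\eta_1$ is already forced by $0\le\abs{\eta_2-\eta_1}<\abs{c-\eta_1}$). The branch $c=\infty$ is the slightly delicate one: to invoke Proposition~\ref{prop:poincare_index_poles} at $\infty$ you tacitly assume the decomposition~\eqref{eqn:local_decomp_sing} of $f\circ\varphi$ at $0$ has a finite principal part with $\abs{a_{-n}}\neq\abs{b_{-n}}$, which is not part of Definition~\ref{def:non-degenerate} at $\infty$ and must be extracted from the sense-preserving assumption there (ruling out the borderline cases of Remark~\ref{rem:singularities_on_crit}). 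The uniform Rouch\'e argument buys you exactly this: it needs no information about the structure of $f$ at $\infty$ beyond the existence of the limit, which is why the paper phrases the hypothesis as the single inequality $\abs{\eta_2-\eta_1}<\abs{c-\eta_1}$ on $\widehat{\C}$.
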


We obtain a formula similar to~\eqref{eqn:relative_counting} for each set $A 
\in \cA$, using Theorem~\ref{thm:counting_on_tile} instead of 
Theorem~\ref{thm:counting}.  This yields $N_{\eta_2}(f; A) = N_{\eta_1}(f; 
A)$ in~1.  In~2., the number of pre-images increases/decreases by $1$ in 
the sets $A$ adjacent to the critical arc $\gamma$, and stays the same in all 
other sets $A$.

Items 3 and 4 are in the spirit of the ``odd number of images theorem'' 
from the theory of gravitational lensing in 
astrophysics~\cite[Thm.~11.5]{PettersLevineWambsganss2001}.

\subsection{Counting pre-images geometrically}

We determine geometrically whether the number of pre-images increases or 
decreases in item 2 of Theorem~\ref{thm:relative_counting}.
The key ingredient is the curvature of the caustics 
(Lemma~\ref{lem:tangent_to_caustic}), which allows to spot their orientation in 
a plot; see Figure~\ref{fig:caustic_index}.
Then, the change of the winding number $\wn{f \circ \gamma}{\eta_2} - \wn{f 
\circ \gamma}{\eta_1}$ can be determined with the next result.

\begin{proposition}[{\cite[Prop.~3.4.4]{Roe2015}}] 
\label{prop:winding_intersection}
Let $\gamma$ be a smooth closed curve and $\eta \notin \tr(\gamma)$.  
Let further $R$ be a ray from $\eta$ to $\infty$ in direction $e^{i\varphi}$, 
such 
	that $R$ is not a tangent at any point on $\gamma$.  Then $R$ intersects 
	$\gamma$ at finitely many points $\gamma(t_1),\dots,\gamma(t_k)$ and we have 
	for the winding number of $\gamma$ about $\eta$
	\begin{equation*}
	\wn{\gamma}{\eta} = \sum_{j=1}^k i_{t_j}(\gamma;R),
	\end{equation*} 
	where the \emph{intersection index} $i_{t_j}$ of $\gamma$ and $R$ at 
	$\gamma(t_j)$, is defined by 
	\begin{equation*}
	i_{t_j}(\gamma;R) =
	\begin{cases}
	+1, \quad \text{if } \im(e^{-i\varphi} \gamma'(t_j)) > 0, \\
	-1, \quad \text{if } \im(e^{-i\varphi} \gamma'(t_j)) < 0.
	\end{cases}
\end{equation*}
Recall that $e^{i \varphi}$ and $\gamma'(t_j)$ form a right-handed basis 
if $\im(e^{- i \varphi} \gamma'(t_j)) > 0$, and a left-handed basis if 
the imaginary part is negative.
\end{proposition}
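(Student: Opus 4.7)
The plan is to reduce to a standard configuration and then identify the winding number with a signed intersection count via the argument change. First, I would translate by $-\eta$ and rotate by $e^{-i\varphi}$ so that $\eta = 0$ and $R$ becomes the positive real axis $(0,\infty)$. Both $\wn{\gamma}{\eta}$ and the intersection index $i_{t_j}(\gamma; R)$ (which depends only on $\im(e^{-i\varphi} \gamma'(t_j))$) are invariant under such isometries, so it suffices to prove the statement in this normalized setting, where $i_{t_j}(\gamma; R) = \sign(\im \gamma'(t_j))$. Finiteness of the intersection set follows from compactness combined with transversality: writing $\gamma : \cc{a,b} \to \C$ and $T = \{t : \gamma(t) \in R\}$, the hypothesis that $R$ is not tangent to $\gamma$ at any intersection forces $\im \gamma'(t_0) \neq 0$ for $t_0 \in T$. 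Hence $\im \gamma$ is strictly monotonic near $t_0$, so $t_0$ is isolated in $T$, and since $T$ is closed in $\cc{a,b}$ it must be finite.

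For the main step, pick a continuous branch $\theta : \cc{a, b} \to \R$ of $\arg(\gamma(t))$, which exists because $\gamma$ avoids $0$, so that $\wn{\gamma}{0} = (\theta(b) - \theta(a))/(2\pi)$. Observe that $\gamma(t) \in R$ precisely when $\theta(t) \in 2\pi \Z$, and writing $\gamma(t) = \abs{\gamma(t)} e^{i\theta(t)}$ and differentiating gives, at an intersection point $t_j$ where $e^{i\theta(t_j)} = 1$, the identity $\im \gamma'(t_j) = \abs{\gamma(t_j)} \theta'(t_j)$. Since $\abs{\gamma(t_j)} > 0$, this yields $\sign(\theta'(t_j)) = \sign(\im \gamma'(t_j)) = i_{t_j}(\gamma; R)$, so $\theta$ crosses each integer multiple of $2\pi$ in its range strictly monotonically with the prescribed sign.

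It remains to assemble the local data into $\theta(b) - \theta(a)$. By shifting the parametrization if necessary, I may assume $\gamma(a) = \gamma(b) \notin R$, so all $t_j$ lie in the open interval $(a,b)$ and $\theta(a), \theta(b) \notin 2\pi \Z$. Between two consecutive elements of the sorted list $a < t_1 < \ldots < t_k < b$, the continuous function $\theta$ avoids $2\pi \Z$ and therefore stays inside a single open strip $(2\pi m, 2\pi(m{+}1))$; at each $t_j$, $\theta$ enters from one side of $2\pi m_j$ and exits on the other, so it contributes exactly $\pm 2\pi$ to the telescoping sum according to $\sign(\theta'(t_j))$. This yields $\theta(b) - \theta(a) = 2\pi \sum_j i_{t_j}(\gamma; R)$, as desired. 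The main obstacle is keeping the strip-bookkeeping clean without a lengthy case analysis on the signs of $\theta$ near the endpoints; a more conceptual alternative is to view $\gamma/\abs{\gamma}$ as a smooth map $S^1 \to S^1$ whose degree is $\wn{\gamma}{0}$, note that transversality makes $1 \in S^1$ a regular value, and invoke the standard degree formula as a signed preimage count with signs equal to $\sign(\im \gamma'(t_j))$. Either route works; the first is elementary, the second more conceptual.
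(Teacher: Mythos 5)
Your argument is correct, but note that the paper does not prove this proposition at all: it is quoted verbatim from Roe's book (Prop.~3.4.4 there) and used as a black box, so there is no in-paper proof to compare against. What you supply is a complete, self-contained elementary proof of the cited result, and it is essentially the standard one: normalize so that $\eta=0$ and $R$ is the positive real axis, lift to a continuous argument $\theta$ with $2\pi\,\wn{\gamma}{0}=\theta(b)-\theta(a)$, identify intersections with $R$ as the transversal crossings of $\theta$ through $2\pi\Z$ via $\im\gamma'(t_j)=\abs{\gamma(t_j)}\,\theta'(t_j)$, and count signed crossings. Two small points of care: first, finiteness needs $T=\gamma^{-1}(R)$ to be closed, which holds only because $\eta\notin\tr(\gamma)$ closes up the ray's missing endpoint --- you use this implicitly and it is worth saying. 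Second, your phrase that each crossing ``contributes exactly $\pm2\pi$ to the telescoping sum'' is slightly loose: the individual increments $\theta(t_{j+1})-\theta(t_j)$ need not be $\pm2\pi$; the clean bookkeeping is that the index $m$ of the strip $\oo{2\pi m, 2\pi(m+1)}$ containing $\theta$ changes by $\sign\theta'(t_j)$ at each crossing, so $\theta(b)-\theta(a)=2\pi(m_k-m_0)=2\pi\sum_j i_{t_j}$ once the basepoint is moved off $R$. You flag this yourself, and your degree-theoretic alternative (regular value $1$ of $\gamma/\abs{\gamma}:S^1\to S^1$) is an equally valid, more conceptual route. Either version would serve as a legitimate replacement for the external citation.
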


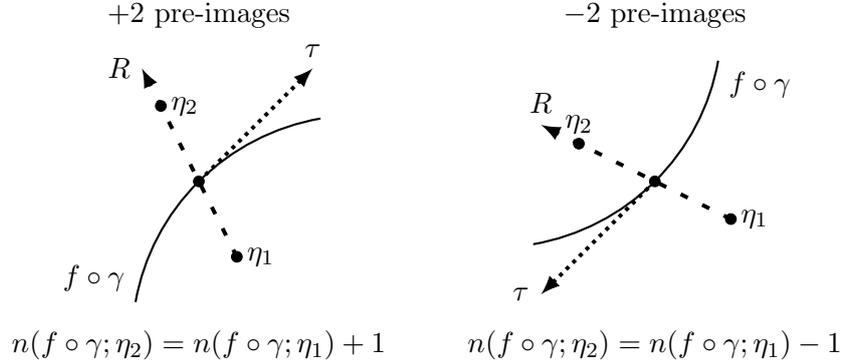
\begin{figure}[t]
	\center
	\begin{tikzpicture}
	\usetikzlibrary{positioning,arrows}
	\tikzset{arrow/.style={-latex}}
	
	\begin{scope}[shift={(6,0)}]
	\node at (-3, 1.2) {$-2$ pre-images};
	\draw[fill=black](-3,-1)circle(2pt);
	\draw [thick] (-3,-1) arc (-45:-80:3);
	\draw [thick] (-3,-1) arc (-45:-10:3) node[below right] {$f\circ \gamma$};
	\draw [line width=1.5pt,arrow, dotted] (-3,-1) to (-4.5,-2.5) node[left] {$\tau$};
	\draw[fill=black](-2,-1.5)circle(2pt) node[right] {$\eta_1$};
	\draw[fill=black](-4,-.5)circle(2pt) node[above] {$\eta_2$};
	\draw [line width=1.5pt, arrow,loosely dashed] (-2,-1.5) to (-4.5,-.25) node[above] {$R$};
	\draw[color=black] (-3,-3.5) node[above] {$\wn{f\circ\gamma}{\eta_2} 
= \wn{f\circ\gamma}{\eta_1} - 1 $};
\end{scope}
    
    \begin{scope}[shift={(-6,0)}]
    \node at (3, 1.2) {$+2$ pre-images};
    \begin{scope}[shift={(0,-1)}]
	\draw[fill=black](3,0)circle(2pt);
	\draw [thick] (3,0) arc (135:170:3) node[above left] {$f\circ \gamma$};
	\draw [thick] (3,0) arc (135:100:3);
	\draw [line width=1.5pt,arrow, dotted] (3,0) to (4.5,1.5) node[above] {$\tau$};
	\draw[fill=black](3.5,-1)circle(2pt) node[right] {$\eta_1$};
	\draw[fill=black](2.5,1)circle(2pt) node[right] {$\eta_2$};
	\draw [line width=1.5pt, arrow,loosely dashed] (3.5,-1) to (2.25,1.5) 
node[left] {$R$};
\end{scope}
	\draw[color=black] (3,-3.5) node[above] {$\wn{f\circ\gamma}{\eta_2} 
= \wn{f\circ\gamma}{\eta_1} + 1 $};
	\end{scope}
\end{tikzpicture}
\vspace{2mm}

\caption{Intersection index and caustics in the $\eta$-plane.}
\label{fig:caustic_index}
\end{figure}

Let $\eta_1$, $\eta_2$ be in two adjacent caustic tiles separated by a 
single caustic arc.  We call two sets adjacent, if they share a common boundary 
arc.  Consider the ray 
$R$ from $\eta_1$ to $\infty$ through $\eta_2$, and let it intersect the 
caustic between $\eta_1$ and $\eta_2$ at a fold point $(f \circ \gamma)(t_0)$.
Although the caustics are only piecewise smooth, we can smooth the 
finitely many (see Lemma~\ref{lem:tangent_to_caustic}) cusps as 
in~\cite[p.~16]{KhavinsonLeeSaez2018} to obtain a smooth curve with same winding 
numbers about $\eta_1$ and $\eta_2$.
Then $\wn{f \circ \gamma}{\eta_1} 
= \wn{f \circ \gamma}{\eta_2} + i_{t_0}(f \circ \gamma; R)$ by 
Proposition~\ref{prop:winding_intersection}, and equivalently
\begin{equation*}
\wn{f \circ \gamma}{\eta_2} - \wn{f \circ \gamma}{\eta_1}
= - i_{t_0}(f \circ \gamma; R),
\end{equation*}
where the intersection index is $+1$ if $\eta_2 - \eta_1$ and $\tau(t_0)$ form 
a right-handed basis, and $-1$ if the two vectors form a left-handed basis; see 
Figure~\ref{fig:caustic_index}.

Caustic tiles have three different shapes.
We call a caustic tile $B$ \emph{deltoid-like} (respectively 
\emph{cardioid-like}), if for every point $z_0 \in \partial B$, for which the 
tangent to the caustics exists and is non-zero, there exists an open disk $D$ 
centered at $z_0$ such that the intersection of $D$ and the tangent line to 
$\partial B$ at $z_0$ is contained in $B$ (respectively contained in $\C 
\setminus B$).
We call a caustic tile \emph{mixed}, if it is neither deltoid nor 
cardioid-like.
In Figure~\ref{fig:mpw_log} (middle right), the tiles with the number $6$ are 
deltoid-like, the tile with the number $2$ is cardioid-like, and the tile with 
the number $4$ is a mixed caustic tile.
Entering a deltoid-like tile gives two additional pre-images,
entering a cardioid-like tile gives two fewer pre-images,
for a mixed tile both occur according to the shape of the ``crossed'' caustic 
arc; see Figure~\ref{fig:caustic_index} and Example~\ref{ex:log}.

\begin{figure}[th!]
{\centering
\hspace{0.002\linewidth}
\includegraphics[width=0.467\linewidth,height=0.47\linewidth]
{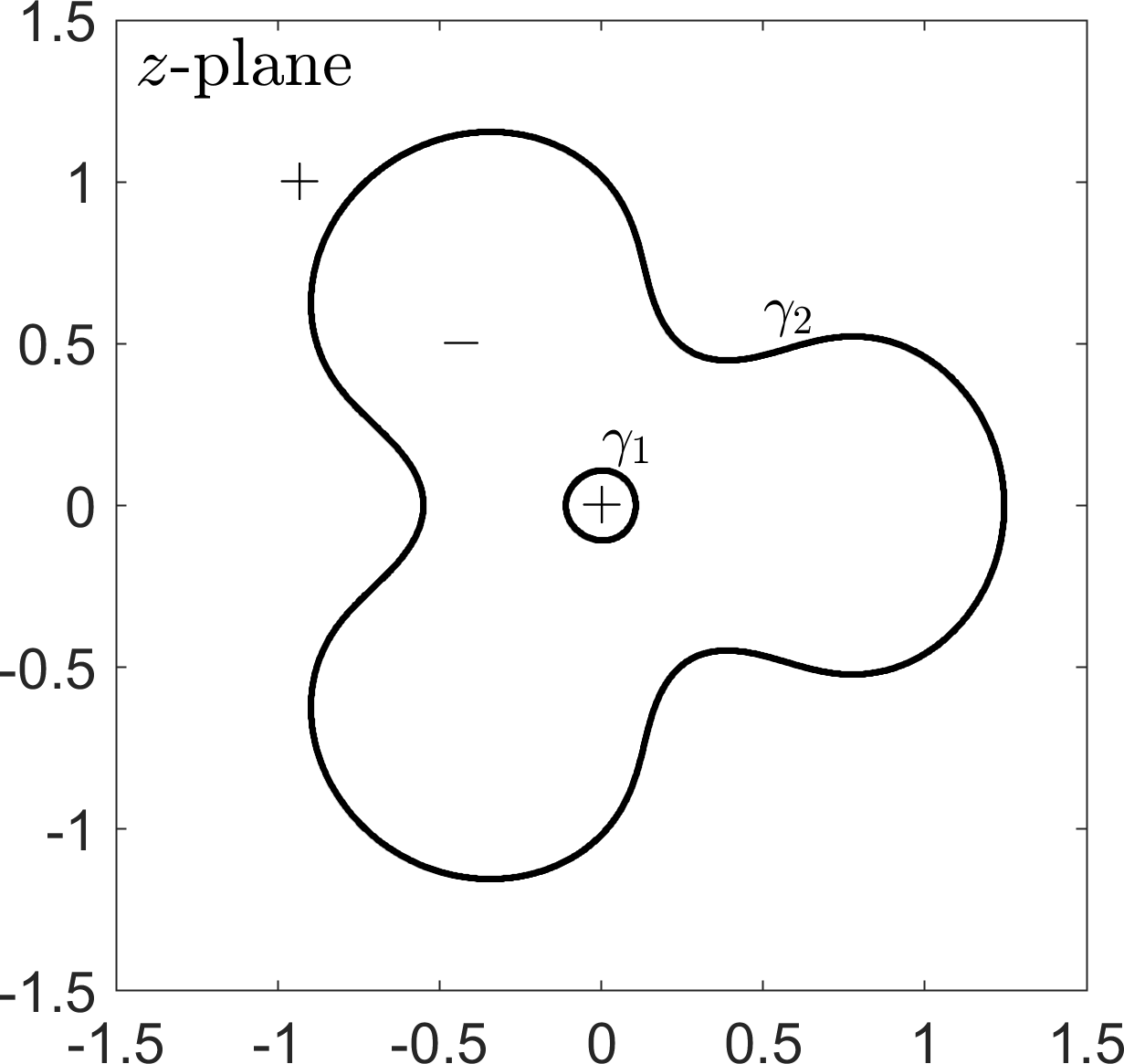}
\hspace{0.013\linewidth}
\includegraphics[width=0.468\linewidth,height=0.47\linewidth]
{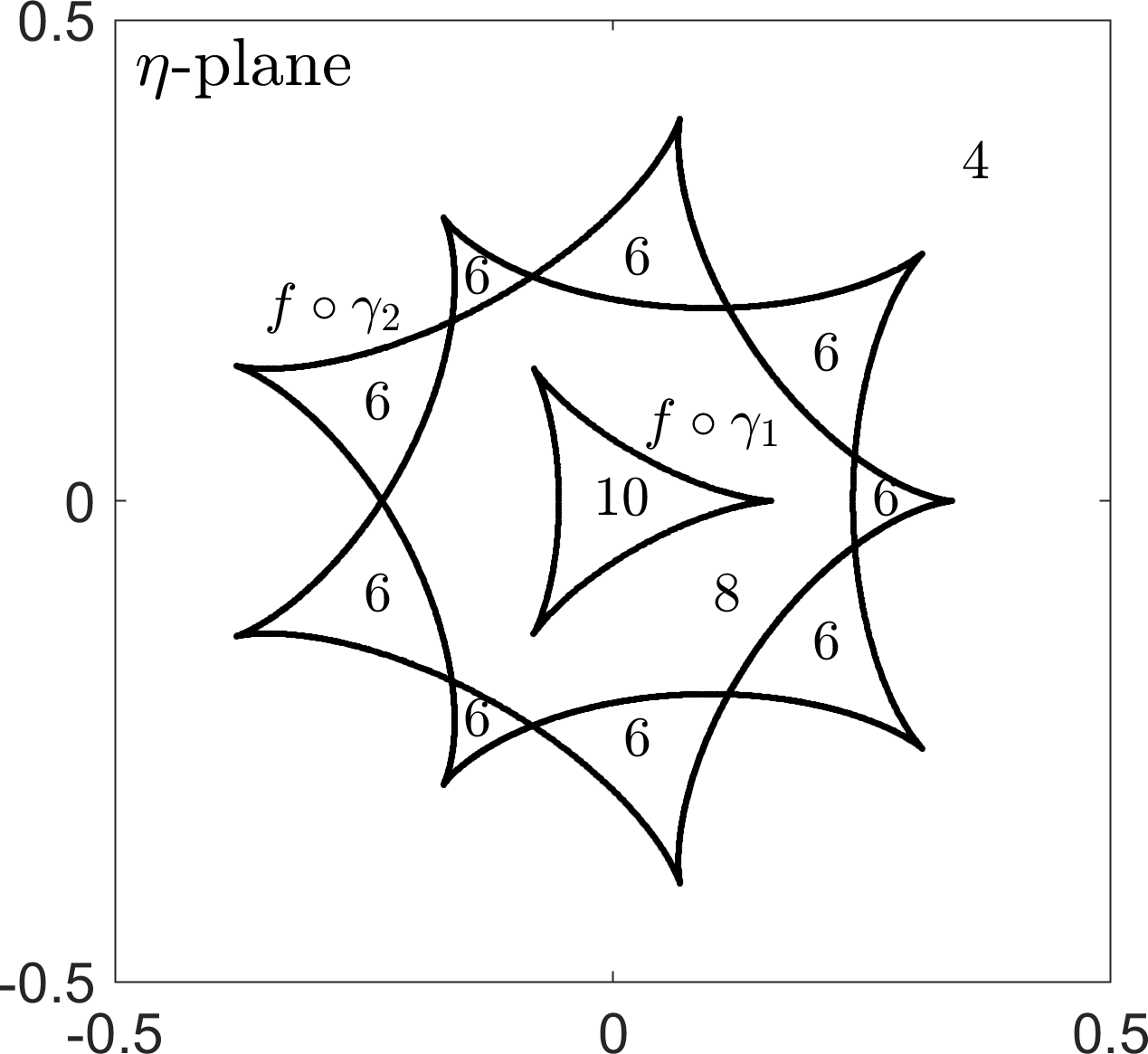}
\hspace{0.008\linewidth}
}

\vspace{2mm}

{\centering
\includegraphics[width=0.44\linewidth,height=0.44\linewidth]
{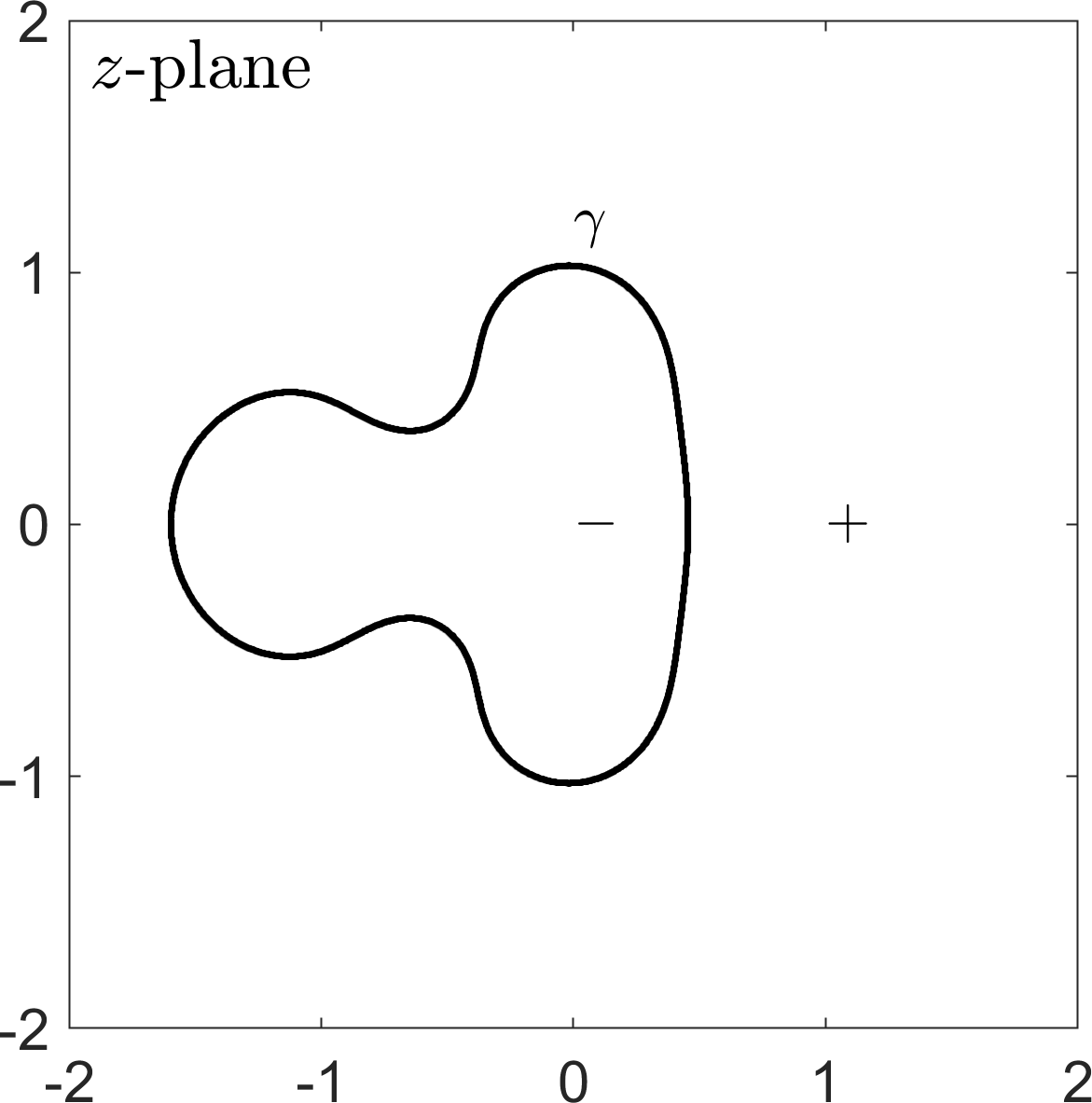}
\hspace{0.04\linewidth}
\includegraphics[width=0.44\linewidth,height=0.44\linewidth]
{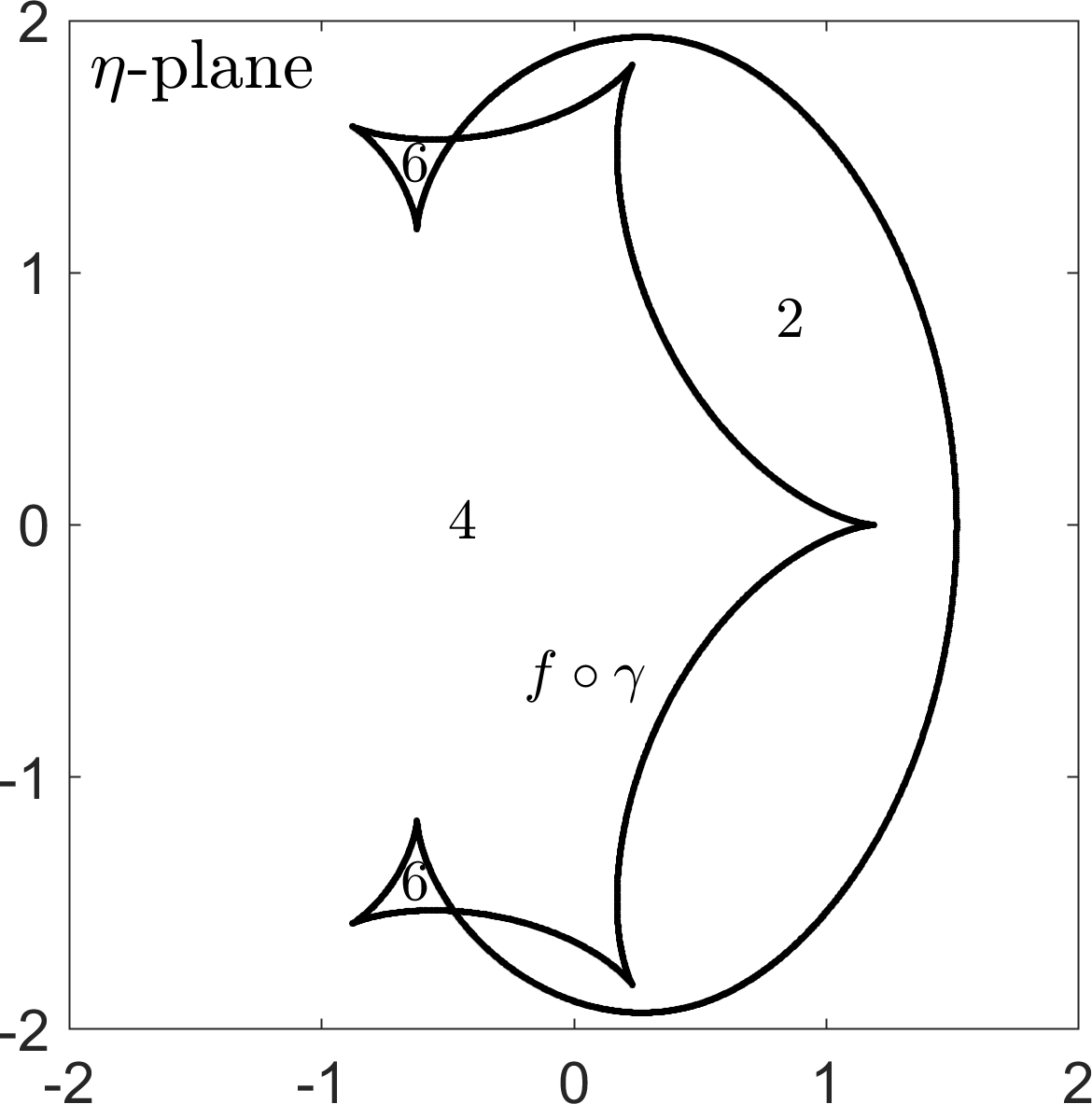}

}

\vspace{2mm}

{\centering
\includegraphics[width=0.44\linewidth,height=0.43\linewidth]
{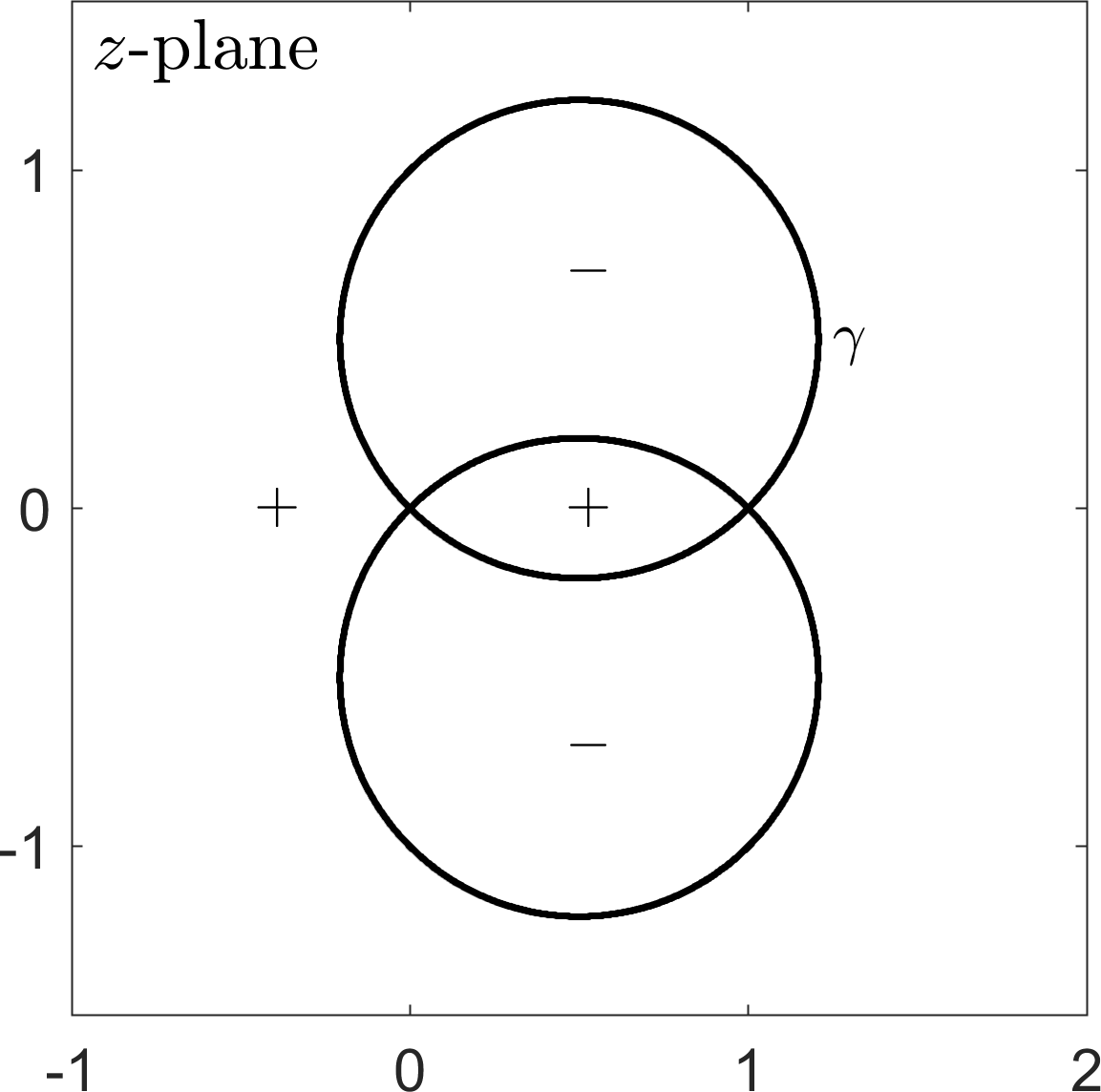}
\hspace{0.04\linewidth}
\includegraphics[width=0.44\linewidth,height=0.44\linewidth]
{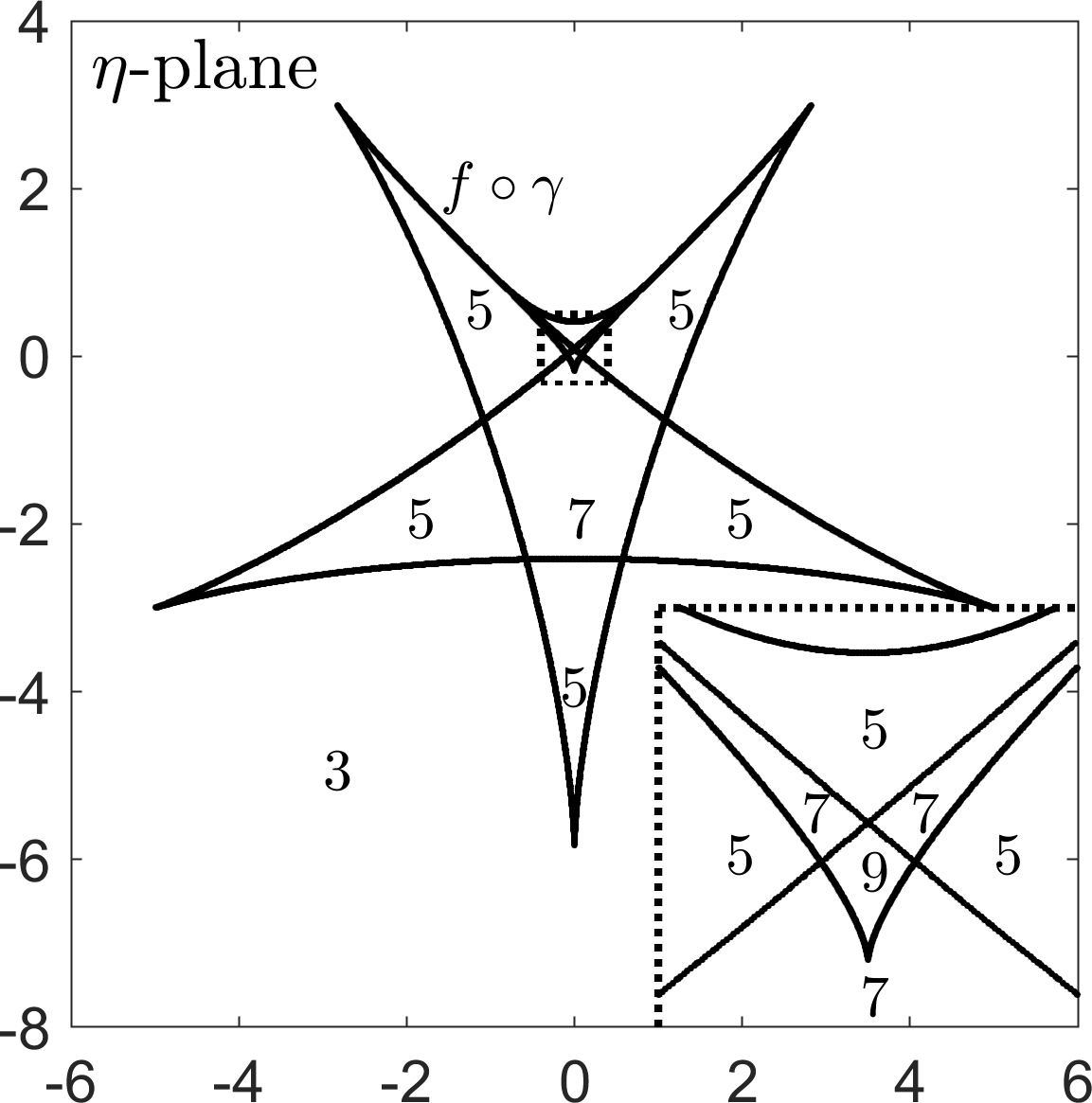}

\vspace{2mm}

}
\caption{Critical curves (left) and caustics (right) of the functions in 
Examples~\ref{ex:mpw} (top), \ref{ex:log} (middle), and~\ref{ex:wilmshurst} 
(bottom).  The $+/-$ 
signs indicate the regions where $f$ is sense-preserving/sense-reversing.
The numbers indicate the number of pre-images of an $\eta$ in the respective 
caustic tile.  The dotted line in the bottom right plot marks a zoom-in.}
\label{fig:mpw_log}
\end{figure}

\begin{example} \label{ex:mpw}
Consider the non-degenerate rational harmonic mapping
\begin{equation*}
f(z) = z - \conj{\left( \frac{z^2}{z^3 - 0.6^3} \right)}.
\end{equation*}
Figure~\ref{fig:mpw_log} (top) shows the critical set and the caustics of $f$.
We have $P(f) = 4$, since $f$ has four simple poles ($\infty$ with index $-1$, 
the others with index $1$); see 
Proposition~\ref{prop:poincare_index_poles}.  Thus, for 
$\eta$ in the outer region, i.e., with $\wn{f\circ\gamma_j}{\eta} = 0$ for 
$j=1,2$, we have $N_\eta(f) = 4 + 2 \cdot 0 = 4$.  For $\eta = 0$,
we have $\wn{f\circ\gamma_1}{0} = 1$ and $\wn{f\circ\gamma_2}{0} = 2$, 
so that $f$ has $N_0(f) = 4 + 2 \cdot 3 = 10$ zeros.
\end{example}

Certain rational harmonic mappings are studied in 
gravitational lensing in astrophysics; 
see~e.g.~\cite{KhavinsonNeumann2008,LuceSeteLiesen2014a}.
Also transcendental functions such as $f(z) = z - \conj{k/\sin(z)}$ appear in 
this context~\cite{BergweilerEremenko2010}.

\begin{example}\label{ex:log}
Figure~\ref{fig:mpw_log} (middle) shows the critical curves and caustics of the 
non-degenerate harmonic mapping
\begin{equation*}
f(z) = z^2 + \frac{1}{\conj z} + \frac{1}{\conj z+1} + 2\log\abs{z}.
\end{equation*}
Here, $P(f) = 4$ from the simple poles at $0$ and $-1$ with index $+1$ and 
the double pole at $\infty$ with index $-2$; see 
Proposition~\ref{prop:poincare_index_poles}.
Consequently, any $\eta$ in the outer region (i.e., with 
$\wn{f\circ\gamma}{\eta} = 0$) has $4$ pre-images.
Note the effect of deltoid-like, cardioid-like and mixed caustic tiles 
described above: the tiles where $\eta$ has $6$ pre-images are deltoid-like,
the tile where $\eta$ has $2$ pre-images is cardioid-like, and the outer tile 
is mixed.
\end{example}

\begin{example} \label{ex:wilmshurst}
The non-degenerate harmonic polynomial
\begin{equation*}
f(z) = p(z) + \conj{q(z)} = z^n + (z-1)^n + \conj{i z^n - i (z-1)^n}, \quad n 
\geq 1,
\end{equation*}
has the maximum number of $n^2$ zeros~\cite[p.~2080]{Wilmshurst1998}.
Its critical set consists of $n-1$ circles, intersecting in $0$ and $1$, and 
can be parametrized as discussed in Section~\ref{sect:number_of_preimages}.
Figure~\ref{fig:mpw_log} (bottom) shows the critical set and caustics for $n = 
3$.
\end{example}

\section{Location of pre-images near the critical set}
\label{sect:local_valence}

In Section~\ref{sect:global_valence}, we omitted the case when $\eta$ is on a 
caustic.
Here, we study the local effect when $\eta$ ``crosses'' a caustic, i.e., when 
the number of pre-images changes.  Since this is a local effect, the harmonic 
mappings are neither required to be globally defined nor to be non-degenerate.

Non-singular pre-images persist under a small change of $\eta$, which is an 
immediate consequence of the inverse function theorem.

\begin{proposition} \label{prop:local_change}
Let $f$ be a harmonic mapping defined in the open set $\Omega \subseteq \C$ 
and let $f$ be non-singular at $z_0 \in \Omega$.
Then there exist open neighborhoods $U \subseteq \Omega \setminus \cC$ of 
$z_0$ and $V$ of $f(z_0)$ such that each $\eta \in V$ has exactly 
one pre-image under $f$ in $U$.
\end{proposition}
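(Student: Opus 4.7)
The plan is to reduce the statement to the classical real inverse function theorem. A harmonic mapping $f = u + iv : \Omega \to \C$ is a real-analytic map $\Omega \subseteq \R^2 \to \R^2$, and a direct computation from~\eqref{eqn:jacobian} shows that its real Jacobian determinant at any point $z$ equals $J_f(z) = \abs{\partial_z f(z)}^2 - \abs{\partial_{\conj z} f(z)}^2$. Since $z_0$ is a non-singular pre-image, $J_f(z_0) \neq 0$, so $f$ has non-degenerate differential at $z_0$ in the real sense.

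First, I would invoke the inverse function theorem to produce open neighborhoods $U_0$ of $z_0$ and $V_0$ of $\eta = f(z_0)$ such that $f$ restricts to a $C^\infty$ diffeomorphism $U_0 \to V_0$; in particular, each point of $V_0$ has exactly one pre-image in $U_0$.

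Next, I would ensure that the neighborhood of $z_0$ avoids the critical set. Since $J_f : \Omega \to \R$ is continuous and $\cC = J_f^{-1}(\{0\})$ is closed in $\Omega$, and since $z_0 \notin \cC$, there exists an open neighborhood $W \subseteq \Omega \setminus \cC$ of $z_0$. Then $U \coloneq U_0 \cap W$ is open with $z_0 \in U \subseteq \Omega \setminus \cC$, and $V \coloneq f(U)$ is open in $V_0$ because $f|_{U_0}$ is a homeomorphism. By construction, $f|_U : U \to V$ is bijective, so every $\widetilde{\eta} \in V$ has exactly one pre-image under $f$ in $U$, which is the desired conclusion.

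There is no real obstacle here; the only point worth checking carefully is the identification of the complex Jacobian $J_f$ in~\eqref{eqn:jacobian} with the determinant of the real differential, which is a short Wirtinger-derivative calculation already implicit in the paper. Once that is noted, the argument is a one-line application of the inverse function theorem together with the closedness of $\cC$.
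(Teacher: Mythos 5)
Your argument is correct and coincides with the paper's approach: the paper gives no separate proof of this proposition, merely noting in the preceding sentence that non-singular pre-images depend continuously on $\eta$ ``by the inverse function theorem,'' which is exactly the reduction you carry out. Your additional care in identifying $J_f$ with the real Jacobian determinant and in shrinking $U$ to avoid $\cC$ is sound and fills in the details the paper leaves implicit.
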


Lyzzaik~\cite{Lyzzaik1992} investigated the local behavior of
light harmonic mappings,
defined on an open and simply connected subset of $\C$.
His analysis relies upon the local transformation of $f$ near a critical point 
$z_0 \in \cC$ into standard mappings $h_2\circ f\circ h_1^{-1}(z) = z^n$ or 
$h_2\circ f\circ h_1^{-1}(z) = \conj z^n$, where $h_1$ and $h_2$ are 
sense-preserving homeomorphisms; see~\cite[Sect.~3]{Lyzzaik1992} for details. 
If such a standard mapping exists we write $f_{z_0} \sim z^n$ and $f_{z_0} \sim 
\conj z^n$ respectively.
One of Lyzzaik's results is the following: Let $f(z_0)$ be a fold and $U$ be a
neighborhood of $z_0$.  Then there exists a partition 
$U_1, U_2$ of $U \setminus \cC$ with $f_{z_0} \sim z$ in~$\overline{U}_1$ and 
$f_{z_0} \sim \conj z$ in~$\overline{U}_2$. Similarly, if $f(z_0)$ is a cusp 
and $h'(z_0) \neq 0$, we have $f_{z_0} \sim z^3$ in~$\overline{U}_1$, $f_{z_0} 
\sim \conj z$ in~$\overline{U}_2$ or $f_{z_0} \sim z$ in~$\overline{U}_1$, 
$f_{z_0} \sim \conj z^3$ in $\overline{U}_2$; see~\cite[Thm.~5.1]{Lyzzaik1992}.
This allows to determine the \emph{valence}
\begin{equation*}
V(f;U) = \sup_{\eta \in \C} \, N_\eta(f; U)
= \sup_{\eta \in \C} \, \abs{ \{ z \in U : f(z) = \eta \} }
\end{equation*}
of $f$ in $U$.
In particular we have
\begin{equation} \label{eqn:valence}
V(f; D_\eps(z_0)) = \begin{cases} 2, &\text{if } f(z_0) \text{ is a fold}, \\
3, &\text{if } f(z_0) \text{ is a cusp with } h'(z_0) \neq 0, 
\end{cases}
\end{equation}
for sufficiently small $\eps > 0$; see~\cite[Thm~5.1]{Lyzzaik1992}.
However, the above transformations are not immediately available for practical 
computations in general.

We complement Lyzzaik's work by investigating which values near a fold $\eta = 
f(z_0)$ have actually $2$, $1$ or no pre-images under $f$ in $D_\eps(z_0)$, and 
by approximately locating the pre-images for certain $\eta$. For this we use 
convergence results on the harmonic Newton iteration
\begin{equation}\label{eqn:harmonic_newton}
z_{k+1} = z_k - \frac{\conj{h'(z_k)} f(z_k) - \conj{g'(z_k)f(z_k)}}{J_f(z_k)}, 
\quad k \geq 0,
\end{equation}
from~\cite{SeteZur2020}.  If the sequence~\eqref{eqn:harmonic_newton} converges 
and all iterates $z_k$ are in $D \subseteq \C$, then there 
exists a zero of $f$ in $\overline{D}$.
The proof of the next theorem relies on this strategy.

\begin{theorem}\label{thm:local_fold}
Let $f$ be a light harmonic mapping and $z_0 \in \cC \setminus \cM$, such that 
$\eta = f(z_0)$ is a fold.  Moreover, let
\begin{equation*}
f(z) = \sum_{k=0}^\infty a_k (z-z_0)^k + \conj{\sum_{k=0}^\infty b_k (z-z_0)^k}
\quad \text{and} \quad
c = - \left( \frac{a_2 \conj{b}_1}{a_1} + \frac{\conj{b}_2 a_1}{\conj{b}_1} 
\right).
\end{equation*}
Then, for all sufficiently small $\eps > 0$, there exists a $\delta > 0$, 
such that for all $0 < t < \delta$ we have:
\begin{enumerate}
\item $\eta + t c$ has exactly two pre-images under $f$ in $D_\eps(z_0)$,
\item $\eta$ has exactly one pre-image under $f$ in $D_\eps(z_0)$,
\item $\eta - t c$ has no pre-image under $f$ in $D_\eps(z_0)$.
\end{enumerate}
In case 1, each disk $\{z \in \C : \abs{z - z_\pm} \leq \text{const} 
\cdot t \}$, where $z_\pm = z_0 \pm i \sqrt{t \, \conj{b}_1/a_1}$, contains one 
of the two pre-images, and $f$ is sense-preserving at one and sense-reversing 
at the other.
\end{theorem}

\begin{proof}
Since $z_0 \in \cC$ and $f(z_0)$ is a fold, we have $h'(z_0) \neq 0$, and hence
$\abs{g'(z_0)} = \abs{h'(z_0)} \neq 0$.  Then there exists $\theta \in 
\co{0, \pi}$ with $\conj{b}_1 = a_1 e^{i2\theta}$, and
\begin{equation*}
c
= - a_1 e^{i \theta} \left( \frac{a_2}{a_1} e^{i \theta} + 
\conj{\frac{b_2}{b_1} e^{i \theta}} \right)
\end{equation*}
is non-zero by Lemma~\ref{lem:cusp_condition}.

1.
We apply the harmonic Newton iteration~\eqref{eqn:harmonic_newton} to the 
shifted function $f_{\eta+t c} = f - (\eta + t c)$
with initial points $z_\pm$.  
By~\cite[Lem.~5.1, Thm.~5.2]{SeteZur2020} and their proofs, the respective 
sequences of 
iterates remain in $D_\pm$,
and converge to two distinct zeros of $f_{\eta+t c}$ for all sufficiently 
small $t > 0$. 
Thus, $\eta+t c$ has exactly two pre-images under $f$ in $D_\eps(z_0)$, using~\eqref{eqn:valence}.

2.
Since $f$ is light and $f(z_0) = \eta$, there exists $\eps > 0$ such that $z_0$ 
is the only pre-image of $\eta$ in $D_\eps(z_0)$.

3.  We show first that the ``direction'' $c$ is not tangential to the 
caustic, and hence that $\eta + t c$ and 
$\eta-t c$ are not in the same caustic tile.
Since $\eta = f(z_0)$ is a fold, we have with $z_0 = \gamma(t_0)$ and the 
tangent $\tau$ from Lemma~\ref{lem:tangent_to_caustic}
\begin{equation*}
\conj{\tau(t_0)} c
= - \psi(t_0) e^{i t_0/2} a_1 e^{i \theta} \left( \frac{a_2}{a_1} e^{i \theta} 
+ \conj{\frac{b_2}{b_1} e^{i \theta}} \right)
= \mp \psi(t_0) \abs{a_1} \left( \frac{a_2}{a_1} e^{i \theta} + 
\conj{\frac{b_2}{b_1} e^{i \theta}} \right),
\end{equation*}
since $e^{i t_0/2} e^{i \theta} a_1 = \pm \abs{a_1}$; see the proof of
Lemma~\ref{lem:cusp_condition}.
Since $\psi$ is real, and non-zero 
at a fold, we have $\im(\conj{\tau(t_0)}c) \neq 0$ by 
Lemma~\ref{lem:cusp_condition}.  
Hence, for a sufficiently small $t > 0$, the points $\eta + t c$ and $\eta-t c$ 
are on different sides of the caustic $f \circ \gamma$,
where $\gamma$ denotes the critical curve through $z_0$.
Thus, 
there are either $2+2 = 4$ or $2-2=0$ pre-images of $\eta - t c$ under 
$f$ in $D_\eps(z_0)$; see Theorem~\ref{thm:counting} if $f$ is 
non-degenerate, and~\cite[Thm.~6.7]{Neumann2005} for light harmonic mappings.
Since $V(f; D_\eps(z_0)) = 2$ by~\eqref{eqn:valence}, only the latter case is 
possible.

Moreover, the two pre-images of $\eta + t c$ in 1.\@ lie on different 
sides of the corresponding critical arc, and hence $f$ is sense-preserving at 
one pre-image and sense-reversing at the other;
see Theorem~\ref{thm:counting_on_tile} and Remark~\ref{rem:counting_formula} if 
$f$ is non-degenerate, and again~\cite[Thm.~6.7]{Neumann2005} for light 
harmonic mappings.
\end{proof}

\begin{figure}[t]
	\center
	\begin{tikzpicture}
	\usetikzlibrary{positioning,arrows}
	\tikzset{arrow/.style={-latex}}
	\draw[fill=black](-3,-1)circle(2pt);
	\node at (-3, 1.6) {$z$-plane};
	\draw [thick] (-3,-1) arc (-45:-80:3);
	\draw [thick] (-3,-1) arc (-45:-10:3) node[below right] {$\gamma$};
	\draw [line width=1.5pt, arrow,loosely dashed] (-3,-1) to (-4.5,-.25) 
node[above] {$z_1$};
	\draw [line width=1.5pt, arrow,loosely dashed] (-3,-1) to (-1.5,-1.75) 
node[above] {$z_2$};
		
    \node at (3, 1.6) {$\eta$-plane};
	\draw[fill=black](3,0)circle(2pt);
	\draw [thick] (3,0) arc (135:170:3) node[above left] {$f\circ \gamma$};
	\draw [thick] (3,0) arc (135:100:3);
	\draw [line width=1.5pt, arrow,loosely dashed] (3,0) to (2.5,1) node[left] 
{$\eta$};
	\draw [line width=1.5pt, dotted] (3.5,-1) to (3,0);
	\end{tikzpicture}

\vspace{0.4cm}

\begin{tikzpicture}
	\usetikzlibrary{positioning,arrows}
	\tikzset{arrow/.style={-latex}}
	\draw[fill=black](-3,-1)circle(2pt);
	\draw [thick] (-3,-1) arc (-45:-80:3);
	\draw [thick] (-3,-1) arc (-45:-10:3) node[left] {$\gamma$};

	\draw [line width=1.5pt, arrow,loosely dashed] (-3,-1) to (-4.5,-.25) 
node[above] {$z_1$};
	\draw [line width=1.5pt, dotted] (-3,-1) to (-1.5,-1.75) node[below] 
{$z_1$};
	\draw [line width=1.5pt, arrow,loosely dashed] (-3,-1) to (-1.5,.5) 
node[right] {$z_3$};
	\draw [line width=1.5pt, arrow,loosely dashed] (-3,-1) to (-4.5,-2.5) 
node[left] {$z_2$};
		
    \begin{scope}[shift={(0,-0.5)}]
	\draw[fill=black](2,-.5)circle(2pt);
	\draw [thick] (2,-.5) arc (270:349:2);
	\draw [thick] (2,-.5) arc (90:11:2) node[left] {$f\circ \gamma$};
	\draw [line width=1.5pt, arrow,loosely dashed] (2,-.5) to (4,-.5) 
node[above] {$\eta$};
	\draw [line width=1.5pt, dotted] (0.7,-.5) to (2,-.5);
	\end{scope}
	\end{tikzpicture}

\caption{Behavior at a fold (top) and cusp (bottom); cf.~\cite[Figs.~4, 
7]{LiesenZur2018a}.}
\label{fig:caustic_crossing}
\end{figure}
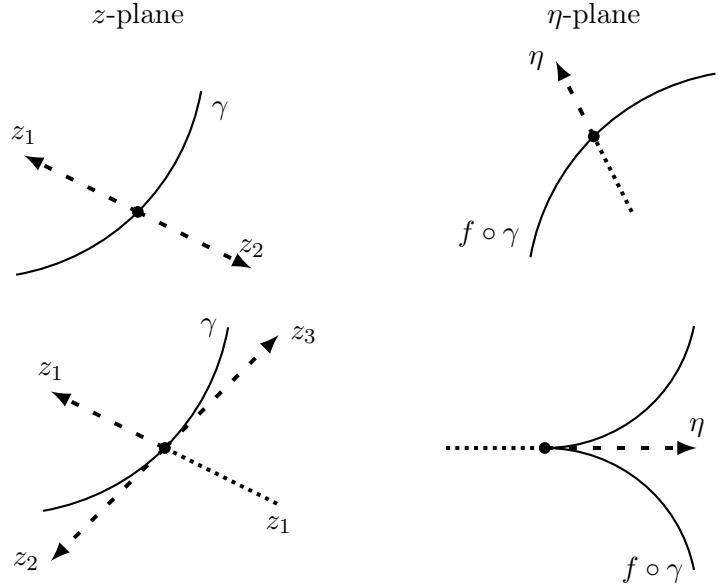

Figure~\ref{fig:caustic_crossing} (top) illustrates the effect in 
Theorem~\ref{thm:local_fold}.  The points $z_1$, $z_2$ are the pre-images of 
$\eta + t c$ under $f$, i.e., the limits of the harmonic Newton iteration for
$f - (\eta + t c)$ with initial points $z_\pm$.

\begin{remark}\label{rem:fold_crossing}
\begin{enumerate}
\item From the proof of Lemma~\ref{lem:cusp_condition} we have 
$\im(\conj{\tau(t_0)} c) > 0$, i.e., $\tau(t_0)$ and $c$ form a right-handed 
($\R$-)basis.
Combining Theorem~\ref{thm:local_fold} with Proposition~\ref{prop:local_change} 
allows to replace $c$ by any direction $d$ with $\im(\conj{\tau(t_0)} d) > 0$ 
without changing the number of pre-images in $D_\eps(z_0)$.
More generally, if $\widetilde{\eta}$ is in the same caustic tile as $\eta + 
t c$ (the tile containing the tangent) and close enough to $\eta$, then 
$\widetilde{\eta}$ has $2$ 
pre-images under $f$ in $D_\eps(z_0)$, and similarly in the other cases.

\item For a fold $\eta$ with several pre-images in $\cC$, the effect of 
Theorem~\ref{thm:local_fold} happens at all points in $f^{-1}(\{ \eta \}) \cap 
\cC$ simultaneously; see Example~\ref{exp:N}.

\item Theorem~\ref{thm:local_fold} only covers pre-images in $D_\eps(z_0)$.
All other non-singular pre-images of $\eta$ 
under $f$ persist by Proposition~\ref{prop:local_change}, when going from 
$\eta$ to 
$\eta \pm t c$, provided that $t > 0$ is 
sufficiently small.
\end{enumerate}
\end{remark}

When $\eta$ is a cusp as in~\eqref{eqn:valence}, we have a similar 
result, which is also based on the harmonic Newton iteration; 
see~\cite[Thm.~5.2, 2.]{SeteZur2020}.
For $\widetilde{\eta}$ close enough to $\eta$ on one side of the caustic, there 
are $3$ pre-images by~\cite[Thm~5.1]{Lyzzaik1992}, and on the other side 
there is only $1$ pre-image by Proposition~\ref{prop:local_change} and 
Theorem~\ref{thm:local_fold}; see Figure~\ref{fig:caustic_crossing} (bottom).

The next example illustrates the local behavior 
near critical points corresponding to a fold, a cusp, and a double fold, and 
near a point in $\cM$.

\begin{example}\label{exp:N}
We consider the harmonic mapping $f(z) = \frac{1}{3}z^3 + \frac{1}{2} 
\conj{z}^2$, which is similar to the one in~\cite[Ex.~5.17]{Neumann2005}.
Since $J_f(z) = 
\abs{z}^2 - \abs{z}^4$, we have $\cC = \partial \bD \cup \{0\}$ and $\cM = 
\{0\}$.  The caustics of $f$ are shown in Figure~\ref{fig:poly_caus}, together 
with certain points $\eta_1, \ldots, \eta_6$.
While ``moving'' $\eta$ from $\eta_1 = -0.4$ to $\eta_6 = 0.9$ we reach a 
double fold, a point in $f(\cM)$, a fold and a cusp.
The respective pre-images of $\eta_j$ under $f$ are shown in 
Figure~\ref{fig:N_exp}, and have been computed with the harmonic Newton 
method~\cite{SeteZur2020}.
The background is colored according to the \emph{phase} 
$f_{\eta_j}/\abs{f_{\eta_j}}$ of the shifted function $f_{\eta_j} = f - 
\eta_j$; see~\cite{Wegert2012} for an 
extensive discussion of phase plots.
The Poincar\'e index of $f$ at $z_0$ corresponds to the color change on 
a small circle around $z_0$ in the positive direction.  In particular, we 
have $\ind(f_{\eta_j};z_0) = +1$ for zeros $z_0$ in $\C \setminus 
\overline{\bD}$, and $\ind(f_{\eta_j};z_0) = -1$ for zeros $z_0$ in $\bD 
\setminus \{ 0 \}$; see also Proposition~\ref{prop:poincare_index_zeros}.
A feature is the zero $0 \in \cM$ of $f$, for which $\ind(f; 0) = -2$ 
by~\eqref{eqn:index_zero_in_N}.
This reflects the fact that two pre-images where $f$ is sense-reversing merge 
together at $0$; see Remark~\ref{rem:zeros_in_M}.
\end{example}

\begin{figure}[t!]
	{\centering
		\includegraphics[width=0.45\linewidth, height = 
0.43\linewidth]{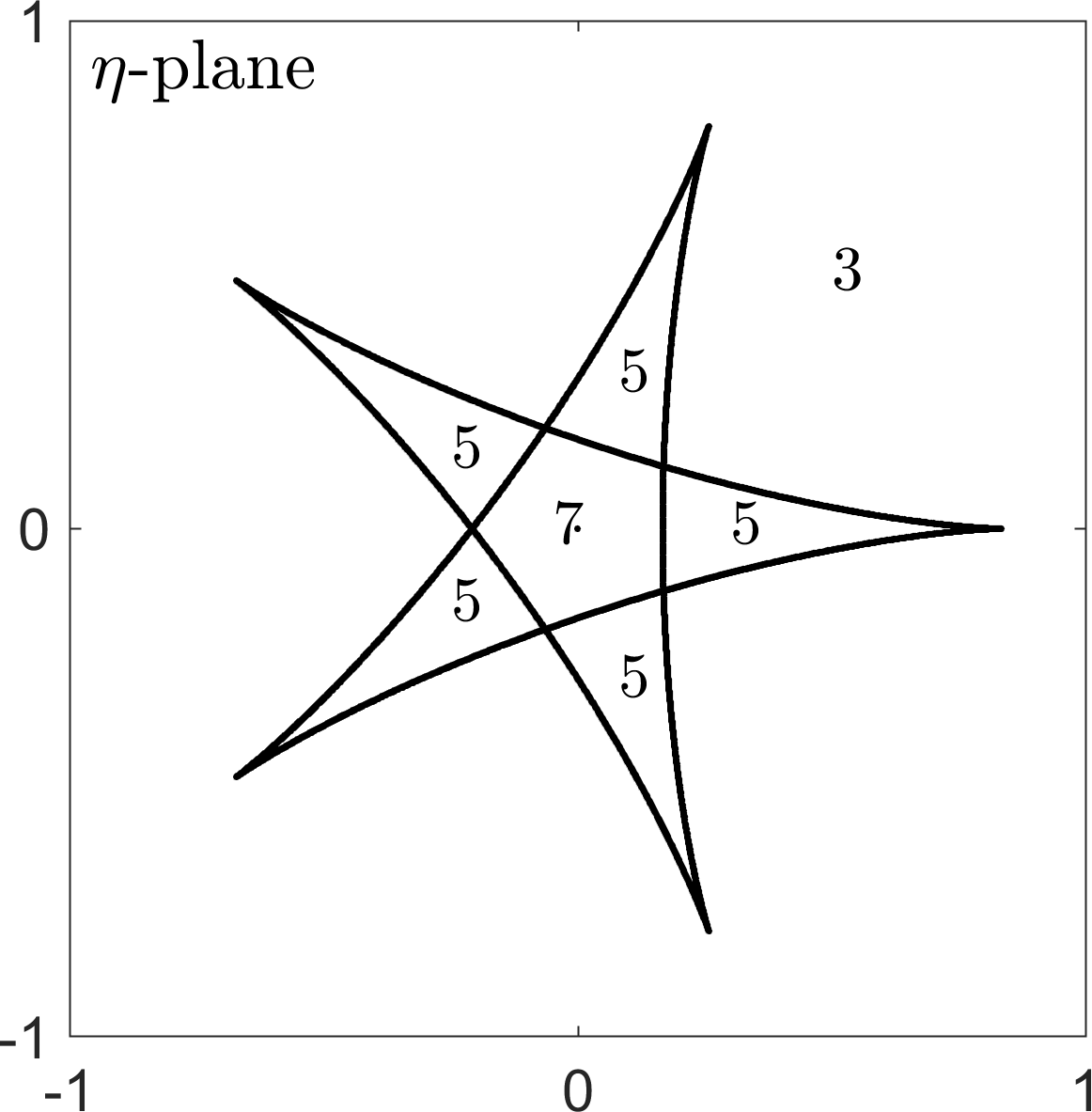}
\hspace{0.02\linewidth}
		\includegraphics[width=0.45\linewidth, height = 
0.43\linewidth]{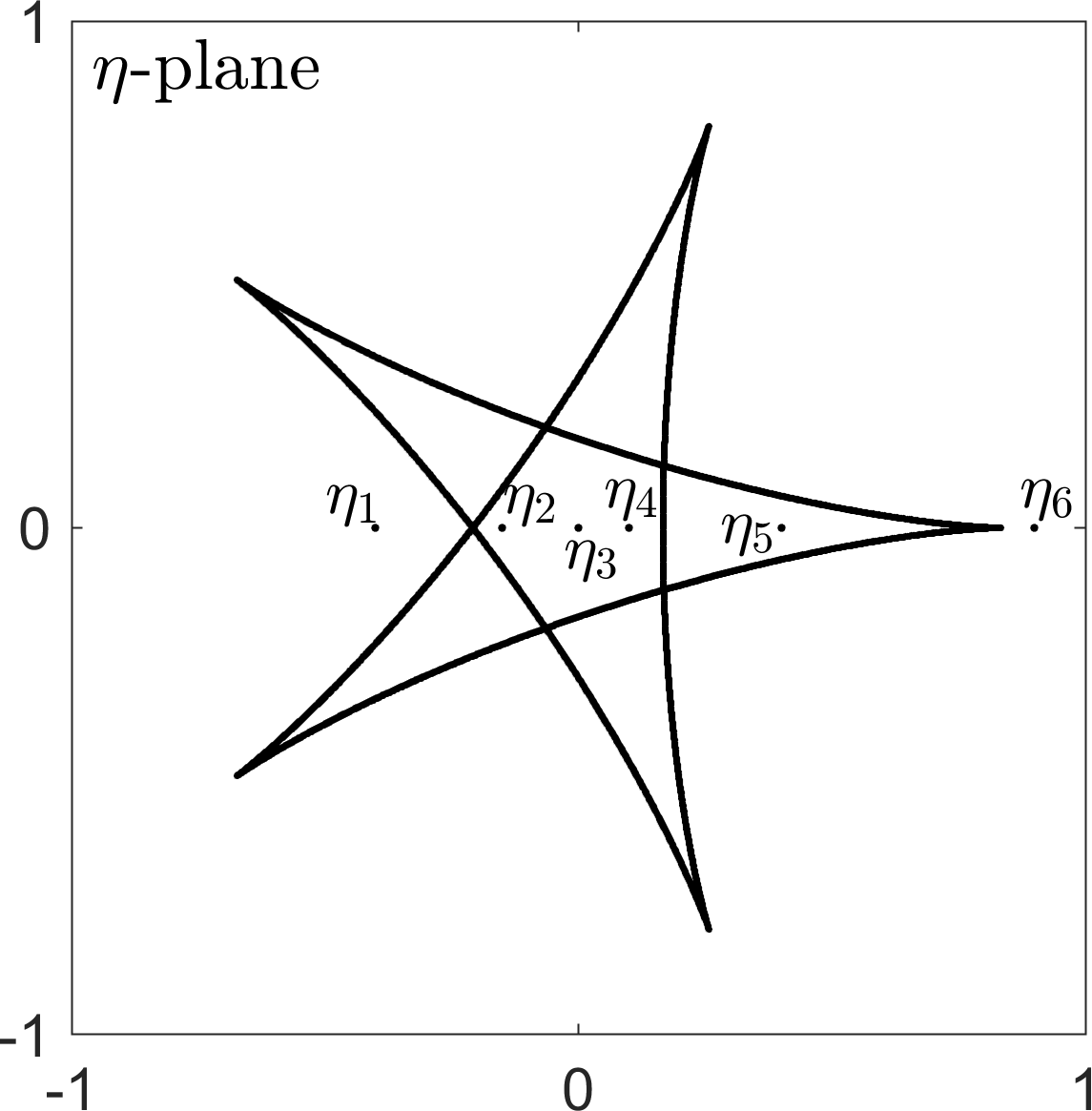}
\vspace{.2cm}

}
\caption{Caustics of $f(z) = \frac{1}{3}z^3 + \frac{1}{2}\conj z^2$; see 
Example~\ref{exp:N}.}
\label{fig:poly_caus}
\end{figure}

\begin{figure}[t!]
	{\centering
		\includegraphics[width=0.32\linewidth, height = 0.32\linewidth]{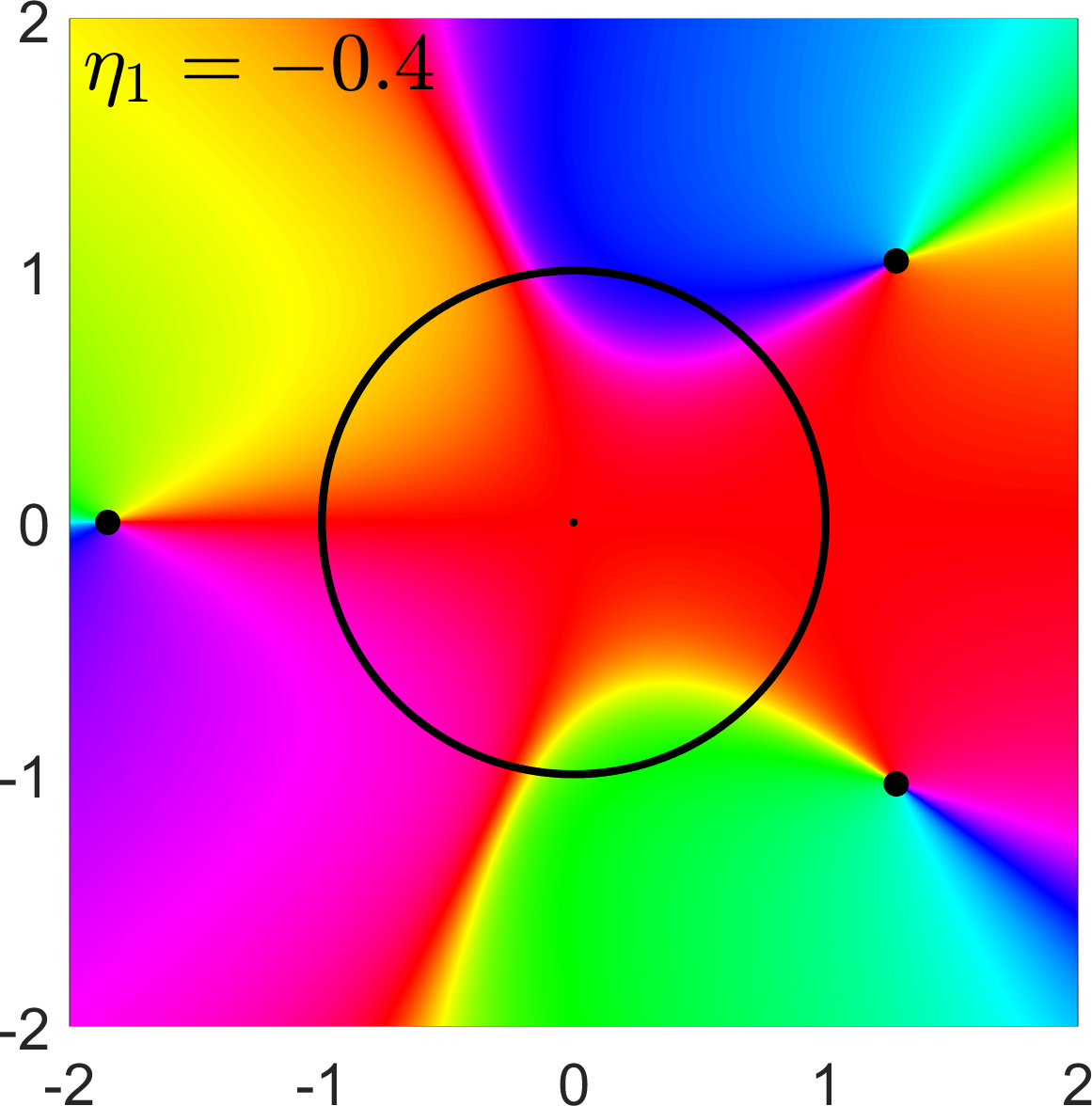}
		\includegraphics[width=0.32\linewidth, height = 0.32\linewidth]{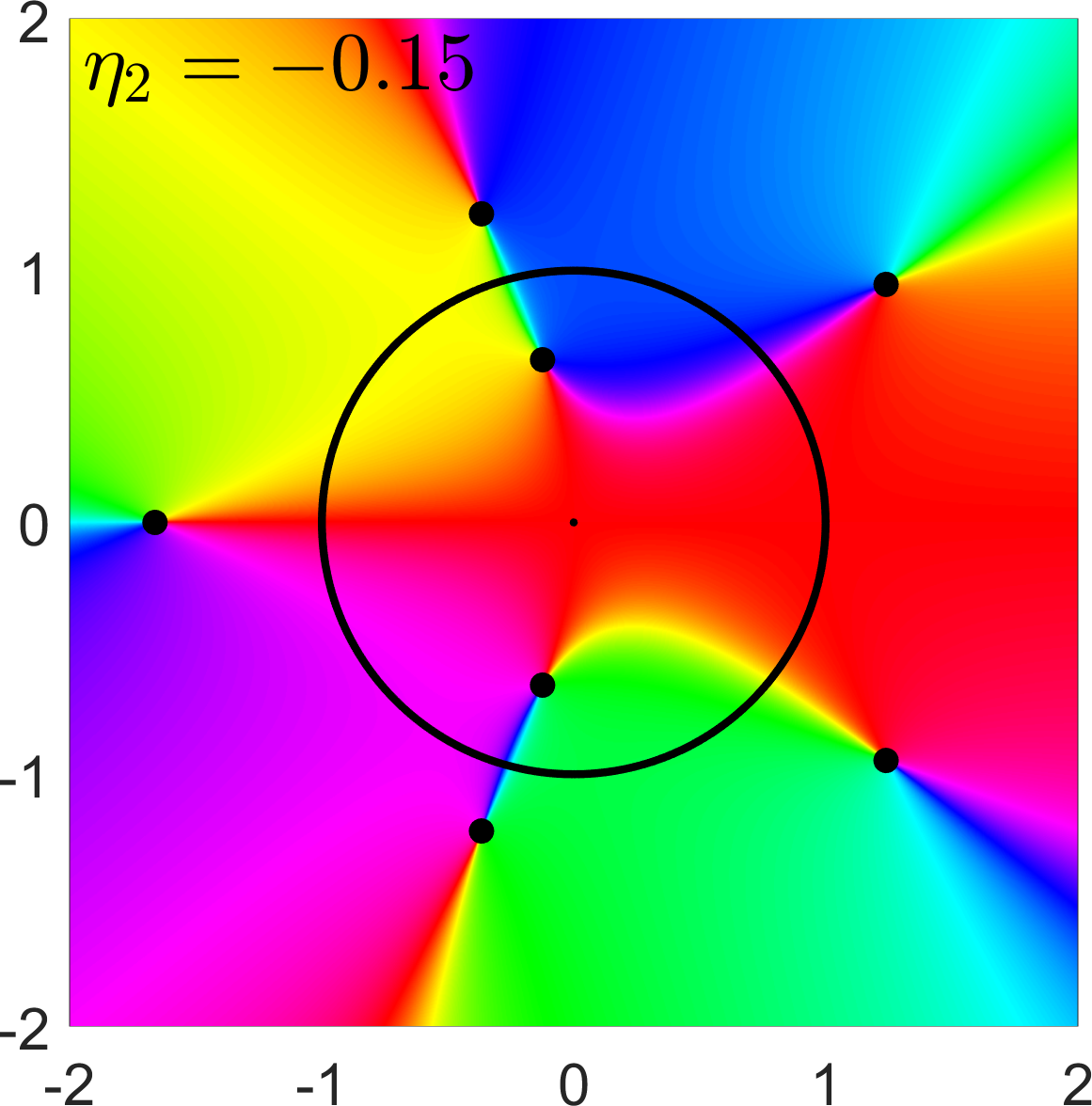}
		\includegraphics[width=0.32\linewidth, height = 0.32\linewidth]{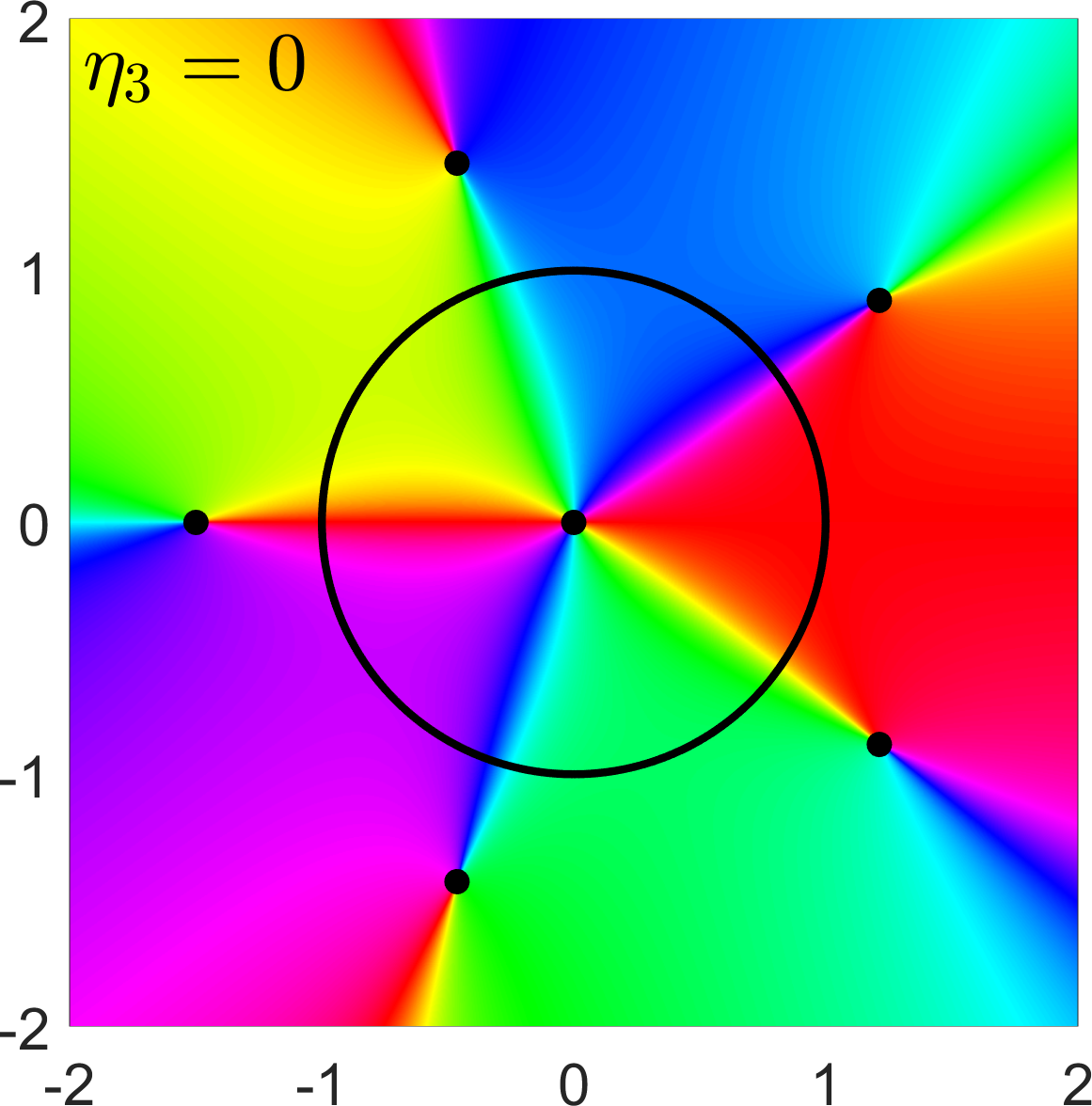}

}

	{\centering
		\includegraphics[width=0.32\linewidth, height = 0.32\linewidth]{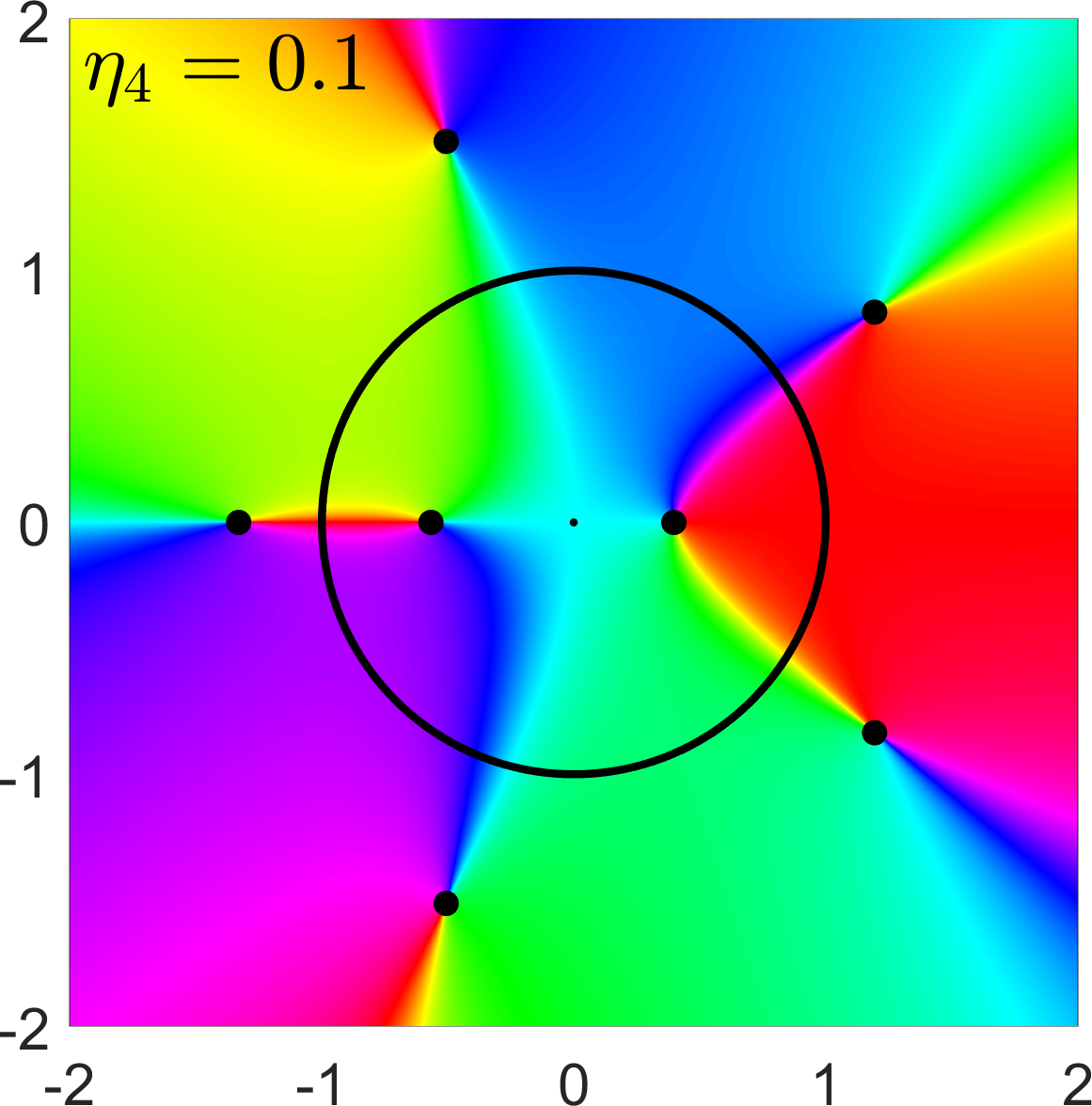}
		\includegraphics[width=0.32\linewidth, height = 0.32\linewidth]{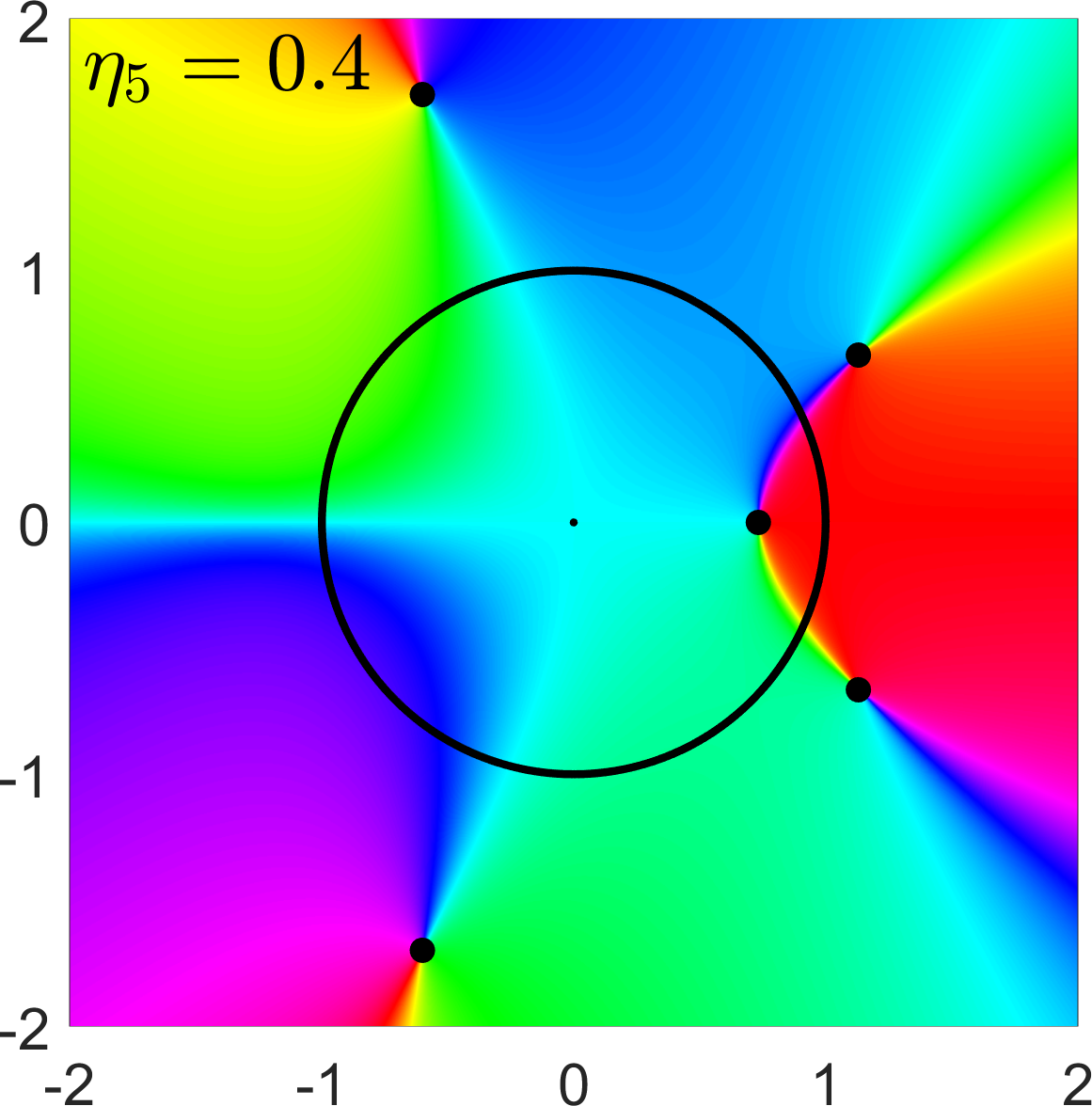}
		\includegraphics[width=0.32\linewidth, height = 0.32\linewidth]{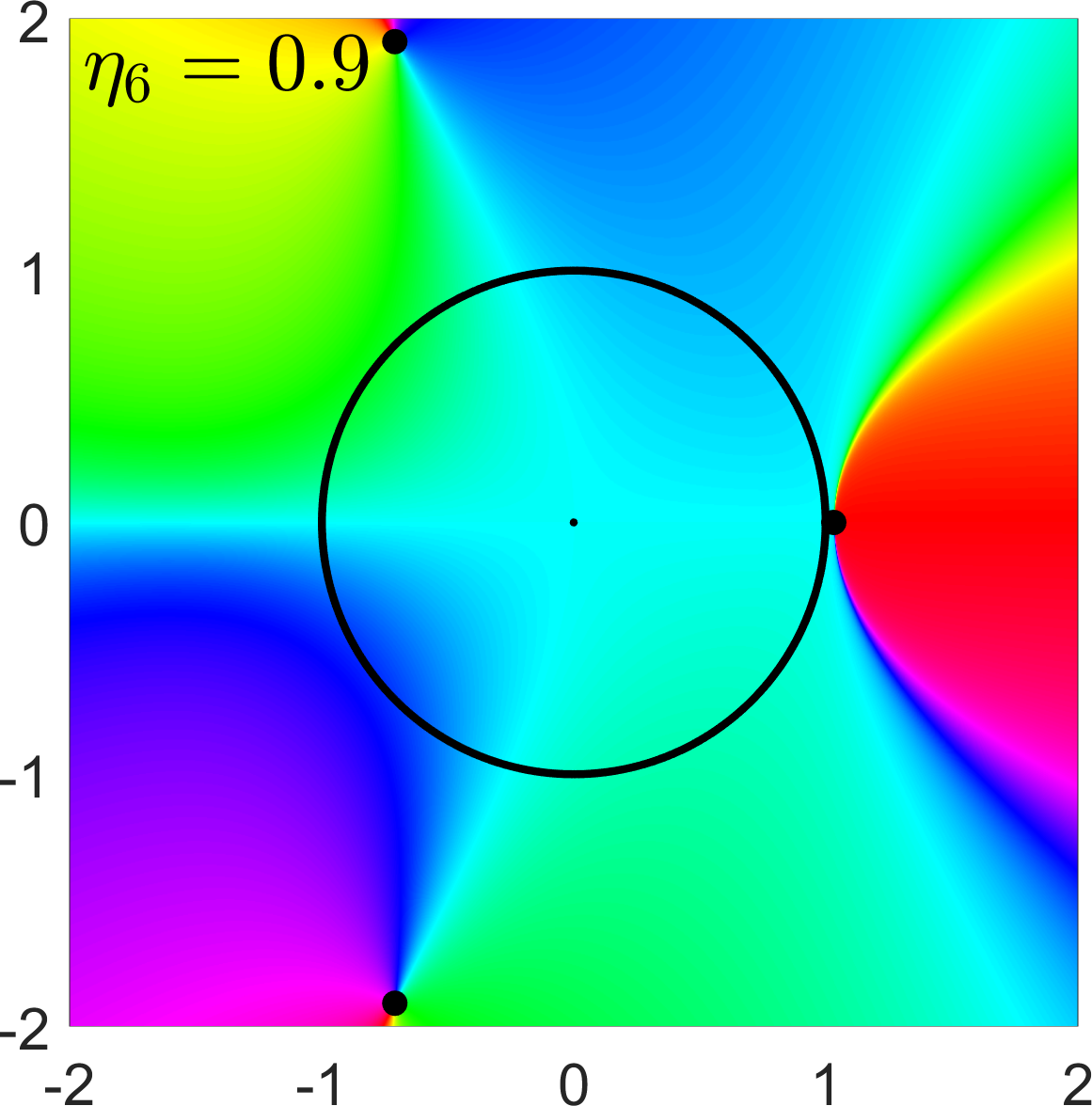}
		
}

\vspace{.2cm}

\caption{Phase plots of $f_{\eta_j}(z) = \frac{1}{3}z^3 + \frac{1}{2}\conj z^2
- \eta_j$ (see Figure~\ref{fig:poly_caus}).  Black dots indicate zeros of 
$f_{\eta_j}$.  The critical set $\cC = \partial \bD \cup \{0\}$ is displayed in 
black.}
\label{fig:N_exp}
\end{figure}

\section{On the number of zeros of harmonic polynomials}\label{sect:polynomials}

We consider harmonic polynomials
\begin{equation}\label{eqn:harmonic_polynomial}
f(z) = p(z) + \conj{q(z)} = \sum_{k=0}^n a_k z^k + \conj{\sum_{k=0}^m b_k 
z^k}, \quad n \geq m,
\end{equation}
with $a_n \neq 0 \neq b_m$.  These are non-degenerate if and only if $\abs{a_n} 
\neq \abs{b_n}$, where we define $b_n = 0$ for $n > m$.
Such functions have at most $n^2$ zeros and this bound is 
sharp~\cite{Wilmshurst1998}.
By the argument principle, $f$ has at least $n$ zeros, if none of them is 
singular. If $f$ has fewer than $n$ zeros, at least one has to be in $\cM$. 
However, counting the zeros with their Poincar\'e indices as multiplicities 
gives again at least $n$ zeros in total.

For $n > m \geq 1$, we study the \emph{maximum valence} of harmonic polynomials
\begin{equation*}
V_{n,m} = \max \{N(p(z) + \conj{q(z)}):\deg(p)=n, \deg(q) = m\},
\end{equation*}
where $N(f)$ denotes the number of zeros of $f$.
We have $V_{n,m} \leq n^2$ from~\cite{Wilmshurst1998}, but the quantity 
$V_{n,m}$ is only known in special cases, namely $V_{n,1} = 
3n-2$ from~\cite{KhavinsonSwiatek2003,Geyer2008} and $V_{n,n-1} = n^2$ 
from~\cite{Wilmshurst1998}.  We show in this section, that for given $n > m 
\geq 1$ and every $k \in \{ n, n+1, \ldots, V_{n,m} \}$, there exists a 
harmonic polynomial~\eqref{eqn:harmonic_polynomial} with $k$ zeros, i.e., every 
number of zeros between the lower and upper bound occurs.
This generalizes~\cite[Thm.~1.1]{BleherHommaJiRoeder2014}.
More precisely, we can achieve all these numbers by just changing $a_0$, 
which is equivalent to considering the pre-images of a certain $\eta$ instead 
of the zeros.

If $\eta$ crosses a single caustic arc at a fold, the number of pre-images 
changes by $\pm 1$ ($\eta$ on the caustic) and $\pm 2$ ($\eta$ on the ``other 
side'' of the caustic) by Theorems~\ref{thm:relative_counting} 
and~\ref{thm:local_fold}.
The key difficulty now is to handle multiple caustic arcs, 
i.e., caustic arcs which are the image of several different critical arcs.

\begin{example} \label{ex:multiple_caustic_arc}
Consider $f(z) = \frac{1}{2}p(z)^2 + \conj{p(z)}$ with $p(z) = z^2 - 1$.
Then $J_f(z) = \abs{p'(z)}^2 (\abs{p(z)}^2 - 1)$, and
$\cC = \{ z \in \C : \abs{p(z)} = 1\}$ consists of the two curves 
$\gamma_\pm(t) = \pm \sqrt{1 + e^{it}}$, $- \pi \leq t \leq \pi$.
Since $p(\gamma_+(t)) = 
p(\gamma_-(t))$, the harmonic mapping $f$ maps $\gamma_\pm$ onto the same 
caustic.

More generally, let $\gamma$ be a closed curve with $\abs{g'(w)/h'(w)} = 
1$ on $\tr(\gamma)$, and let $w = p(z)$ such that $\tr(\gamma)$ has $k \geq 2$ 
disjoint
pre-images under $p$.  Then these pre-images are in the critical set of
$f(z) = h(p(z)) + \conj{g(p(z))}$ and are mapped to the same caustic.
In particular, $h(z) = \frac{1}{n} z^n$, $g(z) = \frac{1}{m} z^m$ with $n > m 
\geq 1$ provides an example of a non-degenerate harmonic polynomial with $k$ 
critical curves that are mapped onto the same caustic.
\end{example}

Multiple caustic arcs can be eliminated by a polynomial perturbation of~$f$.
We write $\cC_f$ and $\cC_F$ for the critical sets of $f$ and $F$, respectively.

\begin{lemma}\label{lem:perturbing_caus}
Let $f$ be a harmonic mapping, and $z_1, z_2 \in \cC_f$, $z_1 \neq z_2$, with 
$f(z_1) = f(z_2)$. 
Then there exists a polynomial $p$ with $\deg(p) = 3$, such that $z_1, 
z_2 \in \cC_F$ for $F = f + p$, but $F(z_1) \neq F(z_2)$.
\end{lemma}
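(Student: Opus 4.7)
The plan is to take $p$ to be an analytic polynomial of degree $3$. The advantage of this choice is that the antiholomorphic part of $\tilde f = f + p$ equals that of $f$, so the Jacobian is controlled through $\partial_z$ alone, and if $p'$ vanishes at $z_1$ and $z_2$ then the critical set of $\tilde f$ automatically contains both points.

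Concretely, I would look for $p$ with $p'(z_1) = p'(z_2) = 0$, which forces
\begin{equation*}
p'(z) = c(z - z_1)(z - z_2)
\end{equation*}
for some constant $c \neq 0$ (to ensure $\deg(p) = 3$), and integrate to obtain $p$ (the constant of integration being irrelevant for the conclusion). With $f = h + \conj g$, one then has $\partial_z \tilde f(z_j) = h'(z_j) + p'(z_j) = h'(z_j)$ and $\partial_{\conj z} \tilde f(z_j) = \conj{g'(z_j)}$, so
\begin{equation*}
J_{\tilde f}(z_j) = \abs{h'(z_j)}^2 - \abs{g'(z_j)}^2 = J_f(z_j) = 0, \quad j = 1,2,
\end{equation*}
confirming $z_1, z_2 \in \cC_{\tilde f}$.

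It then remains to show $\tilde f(z_1) \neq \tilde f(z_2)$. Since $f(z_1) = f(z_2)$, this reduces to showing $p(z_1) \neq p(z_2)$. I would substitute $u = z - \frac{z_1+z_2}{2}$ and $a = \frac{z_2 - z_1}{2} \neq 0$ so that $p'(z) = c(u^2 - a^2)$, and integrate to get $p(z) - p(z_1) = c\bigl(\tfrac{u^3}{3} - a^2 u\bigr) - c\bigl(\tfrac{-a^3}{3} + a^3\bigr)$. Evaluating at $z_2$ (i.e., $u = a$) gives
\begin{equation*}
p(z_2) - p(z_1) = -\tfrac{4}{3} c\, a^3 \neq 0,
\end{equation*}
so $\tilde f(z_1) \neq \tilde f(z_2)$.

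There is no real obstacle here; the only point worth checking carefully is that the vanishing of $p'$ at $z_1, z_2$ is compatible with $p(z_1) \neq p(z_2)$, which holds precisely because a cubic whose derivative has simple roots at $z_1$ and $z_2$ takes its local extrema at those points and therefore cannot take the same value there. Any nonzero choice of $c$ works.
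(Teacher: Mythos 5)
Your proposal is correct and is essentially the paper's argument: the paper takes $p$ to be the degree-$3$ Hermite interpolation polynomial with $p(z_1)=\eps$, $p(z_2)=-\eps$, $p'(z_1)=p'(z_2)=0$, which is exactly the cubic you obtain by integrating $c(z-z_1)(z-z_2)$ for a suitable $c$. Both proofs rest on the same key observation that adding an analytic polynomial whose derivative vanishes at $z_1,z_2$ leaves the Jacobian (equivalently $\abs{\omega}$) unchanged there while separating the values; your explicit computation of $p(z_2)-p(z_1)=-\tfrac{4}{3}ca^3\neq 0$ just verifies what the paper builds in by interpolation.
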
 

\begin{proof}
Let $\eps > 0$, and let $p$ be the (unique) Hermite interpolation polynomial of 
degree $3$ with $p(z_1) = \eps$, $p(z_2) = -\eps$, and $p'(z_1) = 0 = p'(z_2)$. 
We then have $J_F (z_1) = 0 = J_{f} (z_1)$, and 
the same for $z_2$, but $F(z_1) \neq F(z_2)$.
\end{proof}

Next, we show that sufficiently small perturbations do not decrease the number 
of non-singular zeros.

\begin{lemma}\label{lem:perturbing_zeros}
Let $f$ and $g$ be harmonic mappings, such that $f$ has only finitely 
many zeros, which are all non-singular,
and such that $g$ has no singularities at the zeros of $f$.  Then
$N(f) \le N(f+\eps g)$ for all sufficiently small $\eps > 0$.
\end{lemma}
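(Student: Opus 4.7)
The plan is a local Rouché-type argument, one disk around each zero of $f$.

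First I would use that $f$ has only finitely many (hence isolated) zeros $z_1,\ldots,z_N$, all non-singular, to choose pairwise disjoint closed disks $\overline{D_j} = \overline{D_{r_j}(z_j)}$ such that: (i) $f$ has no zero in $\overline{D_j}$ other than $z_j$, (ii) $g$ is harmonic (no singularities) in $\overline{D_j}$, which is possible because $g$ has no singularity at $z_j$ and the singularities of a harmonic mapping are isolated. Since $f$ is continuous and non-zero on the compact set $\partial D_j$, there is a positive lower bound $m_j = \min_{z \in \partial D_j}|f(z)| > 0$, and $g$ is bounded on $\partial D_j$ by some $M_j$. Choosing $\eps > 0$ small enough that $\eps M_j < m_j$ for all $j = 1, \ldots, N$ simultaneously, we get
\[
|(f+\eps g)(z) - f(z)| = \eps |g(z)| < |f(z)| \quad \text{for all } z \in \partial D_j.
\]

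Next I would apply Rouché's theorem for continuous (in particular, harmonic) mappings, e.g.\ \cite[Thm.~2.3]{SeteLuceLiesen2015a}, to conclude
\[
W(f+\eps g; \partial D_j) = W(f; \partial D_j) = \ind(f; z_j) = \pm 1,
\]
where the last equality uses that $z_j$ is a non-singular zero of $f$ and Proposition~\ref{prop:poincare_index_zeros}, equation~\eqref{eqn:index_nonsingular_zero}. In particular $W(f+\eps g; \partial D_j) \neq 0$.

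Finally, since $f$ and $g$ are both harmonic in $\overline{D_j}$, the perturbation $f + \eps g$ has no poles in $D_j$, so its only exceptional points there are zeros. Applying the argument principle (Theorem~\ref{thm:winding}) inside $D_j$ gives
\[
\sum_{w \in (f+\eps g)^{-1}(0) \cap D_j} \ind(f+\eps g; w) = W(f+\eps g; \partial D_j) = \pm 1,
\]
which forces at least one zero of $f+\eps g$ in $D_j$. Summing the contributions from the $N = N(f)$ pairwise disjoint disks yields $N(f+\eps g) \ge N(f)$.

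The only mild subtlety is making sure the chosen $\eps$ works uniformly across all $N$ disks; this is immediate because $N$ is finite and each disk delivers its own positive threshold $m_j/M_j$ (with $M_j$ replaced by any positive upper bound if $g$ vanishes on $\partial D_j$), so one takes $\eps$ below the minimum. No assumption on the behavior of $f$ or $g$ away from the zeros of $f$ is needed, which matches the hypotheses of the lemma.
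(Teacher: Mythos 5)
Your proposal is correct and follows essentially the same route as the paper: disjoint disks around the finitely many non-singular zeros, the bound $\eps\,\abs{g} < \abs{f}$ on the union of their boundaries, Rouch\'e's theorem to preserve the winding, and the argument principle to extract at least one zero of $f+\eps g$ per disk. The paper phrases the last step as $N(f; D_\delta(z_k)) \le N(f+\eps g; D_\delta(z_k))$ rather than invoking $\ind(f;z_j)=\pm 1$ explicitly, but this is only a cosmetic difference.
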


\begin{proof}
Let $z_1, \dots, z_n$ be the zeros of $f$. Since non-singular zeros are 
isolated~\cite[p.~413]{DurenHengartnerLaugesen1996}, there exists $\delta > 0$, 
such that
$D_\delta(z_j) \cap \cC = \emptyset$, $f$ and $g$ have no other exceptional 
points than $z_j$ in $\conj{D_\delta(z_j)}$ for $j=1,\dots,n$, and
$D_\delta(z_j) \cap D_\delta(z_k) = \emptyset$ for $j \neq k$.

Define $\Gamma = \cup_{k=1}^n \partial D_\delta(z_k)$ and let $\eps > 0$ such 
that
\begin{equation*}
\eps \cdot \max\{\abs{g(z)} : z \in \Gamma\}
< \min \{\abs{f(z)} : z \in \Gamma\}.
\end{equation*}
Then we have for $z  \in  \Gamma$
\begin{equation*}
\abs{f(z) - (f(z) + \eps g(z))} = \eps \abs{g(z)} < \abs{f(z)}.
\end{equation*}
By Rouch\'e's theorem (e.g.~\cite[Thm.~2.3]{SeteLuceLiesen2015a}) and 
the argument principle applied on each $\partial D_\delta(z_k)$, we get
\begin{equation*}
N(f) = \sum_{k=1}^n N(f; D_\delta(z_k)) \le  \sum_{k=1}^n N(f+\eps g; 
D_\delta(z_k)) \le N(f+\eps g),
\end{equation*}
which settles the proof.
\end{proof}

With the Lemmas~\ref{lem:perturbing_caus} and~\ref{lem:perturbing_zeros} we get 
the following result on the possible number of zeros of harmonic polynomials.

\begin{theorem}\label{thm:valence_polynomials}
Let $n > m \geq 1$ and $k \in \{ n, n+1, \ldots, V_{n,m} \}$.  Then
there exists a harmonic polynomial $f(z) = p(z) + \conj{q(z)}$ with 
$\deg(p) = n$ and $\deg(q) = m$, and with $k$ zeros.

Moreover, if $k$ and $n$ have different parity ($n-k$ is odd), then $f$ is 
singular, i.e., $0$ is a caustic point of $f$.
If $k$ and $n$ have the same parity, then there exists a non-singular $f$, as 
prescribed above.
\end{theorem}

\begin{proof}
Let $f(z) = p(z) + \conj{q(z)}$ be a harmonic polynomial with $\deg(p) = 
n$, $\deg(q) = m$, and with $V_{n,m}$ zeros, which exists by the 
definition of $V_{n,m}$.  Without loss of generality, we can assume that $f$ 
has no multiple caustic arcs.
Indeed, when
$n=2$ the only critical curve of $f$ is the image of the unit circle under 
a M\"obius transformation, and hence there are no multiple caustic arcs.
If $n \ge 3$ and if $f$ has multiple caustic arcs we resolve them by 
Lemma~\ref{lem:perturbing_caus} with a polynomial perturbation of degree $3$, 
such that no other multiple caustic arcs occur.
For sufficiently small $\eps > 0$, the resulting harmonic polynomial 
has at most $V_{n,m}$ zeros, and at least
$V_{n,m}$ zeros by Lemma~\ref{lem:perturbing_zeros}.
This gives a harmonic polynomial with $V_{n,m}$ zeros and without multiple 
caustic arcs.

By Theorem~\ref{thm:large_eta}, there exists an $\eta_n \in \C$ with
$N_{\eta_n}(f) = n$.  Let $\phi$ be a curve from $\eta_n$ to $0$, which intersects 
the caustics only in folds corresponding to a single caustic arc.
Such a curve exists since (possible) multiple caustic arcs are already 
resolved, and since the zeros of $\psi$ are isolated by 
Lemma~\ref{lem:tangent_to_caustic}. Note that $f$ is light since any $f - 
\eta$ has at most $n^2$ zeros.
Then by Theorems~\ref{thm:relative_counting} 
and~\ref{thm:local_fold}, all $k = n, n+1, \ldots, V_{n,m}$ appear as 
number of pre-images under $f$ for an appropriate $\eta_k \in \tr(\phi)$,
i.e., $N_{\eta_k}(f) = k$, and hence $f - \eta_k$ is a harmonic polynomial with 
$k$ zeros.

The second part follows from 
Theorem~\ref{thm:relative_counting} and the fact that $\eta_n$ can be chosen 
in $\C \setminus f(\cC)$; see Theorem~\ref{thm:large_eta}.
\end{proof}

\begin{remark}
Let $n > m \geq 1$. By the proof of 
Theorem~\ref{thm:valence_polynomials}, there 
exists a harmonic polynomial $f(z) = p(z) + \conj{q(z)}$ with $\deg(p) = n$, 
$\deg(q) = m$, and $\eta_n, \ldots$, $\eta_{V_{n,m}} \in \C$, such that $f - 
\eta_k$ has $k$ zeros. Moreover, $\eta_{n+1}, \eta_{n+3}, \ldots$ are on the 
caustics of $f$, and $\eta_n, \eta_{n+2}, \ldots$ can be chosen in caustic 
tiles.
\end{remark}

Since $V_{n,n-1} = n^2$, we have the following corollary.

\begin{corollary}
Let $n \geq 2$. For each $k \in \{ n, n+1, \ldots, n^2 \}$, there exists 
a harmonic 
polynomial as in~\eqref{eqn:harmonic_polynomial} with $k$ zeros.
\end{corollary}

\section{Outlook}

A further study of the geometry of the caustics should be of interest, e.g., the number of cusps.  This 
an important open problem posed by Petters~\cite[p.~1399]{Petters2010} for certain 
harmonic mappings from gravitational lensing.

While we considered harmonic mappings on the Riemann sphere 
(minus possible poles) in 
this work, also harmonic mappings in bounded domains (similar 
to~\cite{Neumann2005}) and on more general Riemann surfaces might be of 
interest.
We expect similar results for these domains of definition.

The results in Section~\ref{sect:polynomials} could probably be generalized to 
a broader class of harmonic mappings, e.g., non-degenerate rational harmonic 
mappings $f(z) = r(z) + \conj{s(z)}$, using the same approach as above.  
However, one would have to handle multiple caustic arcs in a different way.

\paragraph*{Acknowledgments.}
We thank J\"org Liesen for several helpful comments on the manuscript.
Moreover, we are grateful to the anonymous referees for many valuable comments, 
which lead to improvements of this work.

\footnotesize
\bibliography{valence}
\bibliographystyle{siam} 

\end{document}